\theoremstyle{thmstyleone}%
\newtheorem{theorem}{Theorem}
\newtheorem{lemma}[theorem]{Lemma}
\theoremstyle{thmstyletwo}%
\theoremstyle{thmstylethree}%
\begin{document}

\title[Normalized ground state solutions of nonlinear Schr\"odinger equations]{Normalized ground state solutions of nonlinear Schr\"odinger equations involving exponential critical growth}


\author*[1]{\fnm{Xiaojun} \sur{Chang}}\email{changxj100@nenu.edu.cn}

\author[1]{\fnm{Manting} \sur{Liu}}\email{liumt679@nenu.edu.cn}
\equalcont{These authors contributed equally to this work.}

\author[2]{\fnm{Duokui} \sur{Yan}}\email{duokuiyan@buaa.edu.cn}
\equalcont{These authors contributed equally to this work.}

\affil*[1]{\orgdiv{School of Mathematics and Statistics \& Center for Mathematics and Interdisciplinary Sciences}, \orgname{Northeast Normal University},  \city{Changchun}, \postcode{130024},  \country{China}}

\affil[2]{\orgdiv{School of Mathematical Sciences}, \orgname{Beihang University},  \city{Beijing}, \postcode{100191}, \country{China}}


\abstract{We are concerned with the following nonlinear Schr\"odinger equation
 \begin{eqnarray*}
\begin{aligned}
\begin{cases}
-\Delta u+\lambda u=f(u) \ \ {\rm in}\ \mathbb{R}^{2},\\
u\in H^{1}(\mathbb{R}^{2}),~~~ \int_{\mathbb{R}^2}u^2dx=\rho,
\end{cases}
\end{aligned}
\end{eqnarray*}
where $\rho>0$ is given, $\lambda\in\mathbb{R}$ arises as a Lagrange multiplier and $f$ satisfies an exponential critical growth. Without assuming the Ambrosetti-Rabinowitz condition, we show the existence of normalized ground state solutions for any $\rho>0$. The proof is based on a constrained minimization method and the Trudinger-Moser inequality in $\mathbb{R}^2$.}

\keywords{Normalized ground state solutions, nonlinear Schr\"odinger equations, exponential critical growth, constrained minimization method, Trudinger-Moser inequality}


\pacs[MSC Classification]{35A15, 35J20, 35B33, 35Q55}

\maketitle

\section{Introduction and main results}\label{sec1}

In this paper, we investigate the following stationary nonlinear Schr\"odinger equation
\begin{gather}\label{1.1}
\begin{aligned}
\begin{cases}
-\Delta u+\lambda u=f(u) \ \ {\rm in}\ \mathbb{R}^{N},\\
\int_{\mathbb{R}^N}u^2dx=\rho,
\end{cases}
\end{aligned}
\end{gather}
where $\rho>0$ and $\lambda\in\mathbb{R}$ is a Lagrange multiplier. If $(u,\lambda)\in H^{1}(\mathbb{R}^{N})\times \mathbb{R}$ solves (\ref{1.1}), then $u$ is often called a normalized solution of (\ref{1.1}).

The study of (\ref{1.1}) is related to the search of standing wave solutions with form $\psi(t,x)=e^{-i\lambda t}u(x)$ for the following time-dependent nonlinear Schr\"odinger equation
\begin{equation}\label{Schrodinger}
i\frac{\partial \psi}{\partial t}-\Delta \psi=g(\vert\psi\vert^2)\psi\ \ \mathrm{in}\ \mathbb{R}^N,
\end{equation}
which appears in many models such as nonlinear optics and Bose-Einstein condensations (see \cite{AA1999, BC2013, EGBB1997, F2010}).
Based on the law of conservation of mass, the integral $\int_{\mathbb{R}^N}\vert\psi(x,t)\vert^2dx$ is prescribed along the time evolution of (\ref{Schrodinger}). Accordingly, particular attention is drawn to standing wave solutions with a prescribed $L^2$ norm.
From a physical point of view, the prescribed mass can be interpreted as power supply in nonlinear optics, while it can also represents the total number of the particles in the Bose-Einstein condensates.

 Define the energy functional $J: H^{1}(\mathbb{R}^{N})\rightarrow\mathbb{R}$ corresponding to (\ref{1.1}) by
\begin{eqnarray}\label{FunctionalJ}
J(u):= \frac{1}{2}\int_{\mathbb{R}^{N}}\vert\nabla u\vert^{2}dx-\int_{\mathbb{R}^{N}}F(u)dx,
\end{eqnarray}
where $F(u):=\int_{0}^{u}f(s)ds$. Set
\begin{eqnarray*}
\mathcal{S}_{\rho}:= \left\{u\in H^{1}(\mathbb{R}^{N}):\int_{\mathbb{R}^{N}}u^2dx=\rho\right\}.
\end{eqnarray*}
Clearly, any critical point of $J$ on $\mathcal{S}_{\rho}$ is a normalized solution of (\ref{1.1}).
If $f$ admits a $L^2$ subcritical growth, i.e., $f$ has a growth $|u|^{p-1}$ with $p<2_*:= 2+\frac{4}{N}$, then $J\vert_{\mathcal{S}_{\rho}}$ is bounded below and one can use the minimization method to get a global minimizer \cite{BL1983-1,CL1982,HS2005,S1988}. If $f$ admits a $L^2$ supercritical growth, i.e., $p>2_*$, then $J\vert_{\mathcal{S}_{\rho}}$ is unbounded below. Consequently, the direct minimization on $\mathcal{S}_{\rho}$ does not work and different approaches are required. Indeed, in 1997, Jeanjean \cite{Jeanjean1997} developed a mountain-pass argument for the scaled functional $\tilde{J}\left(u,s\right):=J\left(s\star u\right)$ with $s\star u(\cdot):= e^{\frac{Ns}{2}}u(e^s\cdot)$ to study the $L^2$ supercritical problems. Subsequently, Bartsch and de Valerioda \cite{BDeV2013} applied the fountain theorem to $\tilde{J}$ and obtained infinitely many normalized solutions of (\ref{1.1}). In \cite{IT2019}, Ikoma and Tanaka established a deformation result for the $L^2$ normalized solutions and gave an alternative proof of the results in \cite{BDeV2013,Jeanjean1997}. We note that the following conditions are important in these studies:
 \begin{description}
\item[($h_1$)]~$f: \mathbb{R}\to \mathbb{R}$ is continuous and odd;
\item[($h_2$)]~$N\ge2$, and there exist constants $\eta_1, \eta_2\in \mathbb{R}$ with $2_*<\eta_1<\eta_2<2^*(2^*:= \frac{2N}{N-2}$ for $N\ge3$ and $2^*:=+\infty$ for $N=2$) such that
\begin{eqnarray}\label{AR-1}
0<\eta_1 F(t)\le f(t)t\le \eta_2 F(t),  \qquad \forall t\in \mathbb{R}\setminus\{0\}.
\end{eqnarray}
\end{description}
Here the first part of (\ref{AR-1}) is the usual Ambrosetti-Rabinowitz condition, which is an essential assumption in above papers to obtain bounded Palais-Smale sequences for $J$ constrained on $\mathcal{S}_{\rho}$.

In \cite{BS2017-1,BS2017-2}, Bartsch and Soave presented a minimax method based on the Ghoussoub minimax
principle in \cite{G-1993} to obtain the existence and multiplicity results of (\ref{1.1}). Besides $(h_1), (h_2)$, they also need the following assumption:
 \begin{description}
\item[($h_3$)]~$H$ is of class $C^1$ and $h(t)t>2_*H(t)$,
\end{description}
 where $H(t):=f(t)t-2F(t)$ and $h(t)=H'(t)$.

In \cite{JL2020}, Jeanjean and Lu studied the normalized solutions of (\ref{1.1}) by replacing the Ambosetti-Rabinowitz condition with the following conditions:
 \begin{description}
 \item[($h_4$)]~$\lim\limits_{\vert t\vert\to+\infty}\frac{F(t)}{\vert t\vert^{2_*}}=+\infty$;
\item[($h_5$)]~$\frac{H(t)}{\vert t\vert^{2_*}}$ is strictly decreasing on $(-\infty,0)$ and strictly increasing on $(0,+\infty)$.
\end{description}
By introducing a minimax argument for the case when $f$ is only continuous and analyzing the behavior of ground state energy, they proved the existence of normalized ground state solutions of (\ref{1.1}) for all $N\ge1$. Specially, when $N=2$, they considered the case that $f$ has an exponential subcritical growth, that is, for all $\alpha>0$,
\begin{eqnarray}
\lim\limits_{\vert t\vert \to+\infty}\frac{\vert f(t)\vert}{e^{\alpha t^2}}=0.
\end{eqnarray}

In \cite{BM2020}, Bieganowski and Mederski developed a minimization approach to study the existence of normalized ground state solutions of (\ref{1.1}) when $N\ge3$ and $f$ is $L^2$ supercritical but Sobolev subcritical.
The Ambrosetti-Rabinowitz condition was not assumed in \cite{BM2020}, and the results were established under the following condition:
 \begin{description}
 \item[($h_6$)]~$h(t)t\succ2_{*}H(t)$,
\end{description}
which means that $h(t)t\ge 2_{*}H(t)$ for all $t\in \mathbb{R}$, and for any $\gamma>0$ there is $|t|<\gamma$ such that $h(t)t>2_{*}H(t)$.
 Later, in \cite{MS-2022}, this approach is also effective in the case that $f$ satisfies the Sobolev critical growth.

 Recently, Alves, Ji and Miyagaki \cite{AJM2021,AJM2021-2}
  used some minimax arguments as in \cite{Jeanjean1997,JL2020} combined with the Trudinger-Moser inequality in $\mathbb{R}^2$ (see \cite{C1992,do1997}) to study the existence and multiplicity of normalized solutions to
   (\ref{1.1}) in the case when $N=2$ and $f$ admits an exponential critical growth, i.e.,
there exists $\alpha_0>0$ such that
\begin{eqnarray}\label{critical}
\lim\limits_{\vert t\vert\to+\infty}\frac{\vert f(t)\vert}{e^{\alpha t^2}}=0, \, \, \forall \alpha>\alpha_0,  \qquad \lim\limits_{\vert t\vert\to+\infty}\frac{\vert f(t)\vert}{e^{\alpha t^2}}=+\infty,  \, \, \forall \alpha<\alpha_0.
\end{eqnarray}
In particular, in \cite{AJM2021}, the authors proved the existence of normalized ground state solutions to (\ref{1.1}) under the assumption (\ref{critical}) with $\alpha_0=4\pi$ for $\rho\in(0,1)$ and some suitable conditions on $f$, where the Ambrosetti-Rabinowitz condition plays a crucial role.

It is natural to ask if one can show the existence of normalized ground state solutions to (\ref{1.1}) for the exponential critical nonlinearity without the Ambrosetti-Rabinowitz condition in the case of $N=2$. The goal of this paper is to give an affirmative answer to this question. Inspired by \cite{BM2020,JL2020,MS-2022}, for any $\rho>0$, we will apply a minimization argument to prove the existence of normalized ground state solutions to (\ref{1.1}) under a weak monotonicity condition.

For more studies on normalized solutions of nonlinear Schr\"odinger equations, we refer to \cite{JJLV-2022,JL-MA-2021,L-2021,S2020-1,S2020-2,WW-2022} and the references therein.

In the sequel, we assume $N=2$ in (\ref{1.1}). We call $u_{\rho}\in H^1(\mathbb{R}^2)$ a normalized ground state solution of (\ref{1.1}), if there is $\lambda\in \mathbb{R}$ such that $(u_{\rho},\lambda)$ satisfies (\ref{1.1}) and
\begin{equation*}
J(u_{\rho})=\inf\{J(u): u\in \mathcal{S}_{\rho}, \left(J\vert_{\mathcal{S}_{\rho}}\right)'(u)=0\}.
\end{equation*}

Throughout the paper, the following assumptions are formulated:
\begin{description}
\item[($f_1$)]~$f\in C^1(\mathbb{R}, \mathbb{R})$ and (\ref{critical}) holds;
\item[($f_2$)]~$\lim\limits_{\vert t\vert\to 0}\frac{f(t)}{\vert t\vert^{3}}=0$;
\item[($f_3$)]~$h(t)t\ge 4H(t), \quad \forall t\in \mathbb{R}$;
\item[($f_4$)]~there exist $p>4$ and $\beta>0$ such that
\begin{eqnarray*}
sgn(t)f(t)\ge \beta \vert t\vert^{p-1}, \quad \forall t\in \mathbb{R},
\end{eqnarray*}
where $sgn: \mathbb{R}\setminus\{0\}\to \mathbb{R}$ is defined by
\begin{eqnarray*} sgn(t)=\left\{
\begin{array}{ll}
1 ~~&\mbox{if}~~t>0,\\
-1 ~~&\mbox{if}~~t<0.
\end{array}
\right.
\end{eqnarray*}
\end{description}

Let $H_r^1(\mathbb{R}^2)$ denotes the subspace of radial symmetric functions in $H^1(\mathbb{R}^2)$. Our main result is stated as follows.
\begin{theorem}\label{Thm1.1}
For any $\rho>0,$ there exists $\beta_{1}>0$ such that the following holds: if $f$ satisfies $(f_{1})- (f_{4})$ with $\beta\ge \beta_{1},$ then (\ref{1.1}) admits a normalized ground state solution $u^*\in H^1_r(\mathbb{R}^2)$ for some $\lambda\in \mathbb{R}^+$.
\end{theorem}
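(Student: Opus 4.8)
The plan is to recast the problem as a constrained minimization over a Pohozaev-type manifold and to exploit the lower bound $(f_4)$ with $\beta$ large to push the minimizing level below the Trudinger--Moser compactness threshold. First I would introduce the $L^2$-preserving scaling $s\star u(x):=e^{s}u(e^{s}x)$, so that $\|s\star u\|_2^2=\|u\|_2^2$ and $\|\nabla(s\star u)\|_2^2=e^{2s}\|\nabla u\|_2^2$, and study the fiber map $\phi_u(s):=J(s\star u)$ for $u\in\mathcal{S}_{\rho}$. A direct computation gives $\phi_u'(s)=e^{2s}\|\nabla u\|_2^2-e^{-2s}\int_{\mathbb{R}^2}H(e^{s}u)\,dx$; condition $(f_2)$ (equivalently $F(t)=o(|t|^4)$ near $0$) forces $\phi_u(s)\to 0^{+}$ as $s\to-\infty$, while $(f_4)$ forces $\phi_u(s)\to-\infty$ as $s\to+\infty$, so $\phi_u$ attains a positive maximum. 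Writing $P(u):=\|\nabla u\|_2^2-\int_{\mathbb{R}^2}H(u)\,dx$, the critical points of $\phi_u$ lie on $\mathcal{P}_{\rho}:=\{u\in\mathcal{S}_{\rho}:P(u)=0\}$, and I would use $(f_3)$ — i.e. the monotonicity of $t\mapsto H(t)/t^{4}$ (note $2_*=4$ here) — to check that $\phi_u''(s)\le 0$ at every zero of $\phi_u'$, which together with the boundary behaviour yields a unique maximizer $s(u)$. This reduces the problem to $m(\rho):=\inf_{\mathcal{P}_{\rho}}J=\inf_{u\in\mathcal{S}_{\rho}}\max_{s\in\mathbb{R}}J(s\star u)$, with $m(\rho)>0$ following from $(f_2)$ and the Trudinger--Moser inequality.

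Next I would work in the radial subspace $H^1_r(\mathbb{R}^2)$, where the compact embedding $H^1_r(\mathbb{R}^2)\hookrightarrow L^{q}(\mathbb{R}^2)$ holds for every $q>2$, and take a minimizing sequence $(u_n)\subset\mathcal{P}_{\rho}$; Schwarz symmetrization, which preserves $\|u\|_2$ and does not increase $\|\nabla u\|_2$, lets me identify the radial minimizing level with $m(\rho)$. The defining identity $\|\nabla u_n\|_2^2=\int_{\mathbb{R}^2}H(u_n)\,dx$ together with $(f_3)$--$(f_4)$ yields coercivity on $\mathcal{P}_{\rho}$, so $(u_n)$ is bounded in $H^1_r(\mathbb{R}^2)$ and, up to a subsequence, $u_n\rightharpoonup u^{*}$ in $H^1_r$, $u_n\to u^{*}$ in $L^{q}$ for all $q>2$, and a.e.

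The crux, and the step I expect to be the main obstacle, is recovering compactness of the nonlinear terms $\int_{\mathbb{R}^2}F(u_n)\,dx$ and $\int_{\mathbb{R}^2}f(u_n)u_n\,dx$ under the exponential critical growth $(f_1)$. The Trudinger--Moser inequality bounds $\int_{\mathbb{R}^2}(e^{\alpha u_n^2}-1)\,dx$ uniformly only when $\alpha\,\limsup_n\|\nabla u_n\|_2^2<4\pi$, so I must guarantee $\limsup_n\|\nabla u_n\|_2^2<4\pi/\alpha_0$. This is precisely where the threshold $\beta_1=\beta_1(\rho)$ enters: comparing $J$ with the energy of the pure-power nonlinearity $\beta|t|^{p-2}t$ supplied by $(f_4)$ shows that $m(\rho)\to 0$ as $\beta\to+\infty$, and combined with the coercivity estimate this forces the gradient norm of the minimizing sequence below $4\pi/\alpha_0$ once $\beta\ge\beta_1$. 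With this bound in hand, a Vitali/generalized-Lebesgue argument — splitting $f$ into a subcritical part handled by $L^{q}$ convergence and an exponential part controlled by the uniform Trudinger--Moser estimate — gives the convergence of both nonlinear integrals, whence $u^{*}\in\mathcal{P}_{\rho}$ and $J(u^{*})=m(\rho)$.

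Finally, since $\mathcal{P}_{\rho}$ is a natural constraint, the minimizer $u^{*}$ is a critical point of $J|_{\mathcal{S}_{\rho}}$, i.e. there is $\lambda\in\mathbb{R}$ with $-\Delta u^{*}+\lambda u^{*}=f(u^{*})$. Testing this equation with $u^{*}$ and combining with the Pohozaev identity in dimension two (which is exactly $P(u^{*})=0$) yields $\lambda\rho=2\int_{\mathbb{R}^2}F(u^{*})\,dx$; because $(f_4)$ makes $\mathrm{sgn}(t)f(t)>0$ and hence $F(t)>0$ for $t\neq 0$, I conclude $\lambda>0$. Since every normalized solution satisfies the Pohozaev identity and therefore lies on $\mathcal{P}_{\rho}$, the value $J(u^{*})=m(\rho)=\inf_{\mathcal{P}_{\rho}}J$ is the least energy among all such solutions, so $u^{*}\in H^1_r(\mathbb{R}^2)$ is the desired normalized ground state with $\lambda\in\mathbb{R}^{+}$.
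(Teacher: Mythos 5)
Your overall strategy (fiber scaling, Pohozaev manifold, radial minimization, pushing the level below the Trudinger--Moser threshold by taking $\beta$ large, natural constraint) is the same as the paper's, but there are two genuine gaps where you assert exactly the steps that constitute the paper's actual work.

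First, you minimize over $\mathcal{P}_{\rho}=\mathcal{S}_{\rho}\cap\mathcal{M}$, i.e.\ with the equality constraint $\|u\|_2^2=\rho$. The weak limit $u^{*}$ of a radial minimizing sequence only satisfies $\|u^{*}\|_2^2\le\rho$, and nothing in your argument prevents mass from leaking ($\|u^{*}\|_2^2<\rho$), in which case $u^{*}\notin\mathcal{P}_{\rho}$ and the conclusion $J(u^{*})=m(\rho)$ does not close. The paper avoids this by minimizing over the \emph{ball} $\mathcal{D}_{\rho}\cap\mathcal{M}$ with $\mathcal{D}_{\rho}=\{u:\|u\|_2^2\le\rho\}$, so the weak limit is automatically admissible, and then rules out $\|u_0\|_2^2<\rho$ a posteriori: if the constraint were inactive, $u_0$ would solve a free problem on the open set $(\mathcal{D}_{\rho}\setminus\mathcal{S}_{\rho})\cap\mathcal{M}$ with a multiplier $\theta$, and combining the equation, the Pohozaev identity and the rigidity statement that $h(t)t-4H(t)\equiv 0$ near $0$ forces $H(t)=C|t|^4$ (contradicting $(f_1)$--$(f_2)$) yields $\theta=0$ and then $\int F(u_0)=0$, contradicting $(f_4)$. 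Some argument of this kind is indispensable and is missing from your proposal.

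Second, the claim that the identity $\|\nabla u_n\|_2^2=\int H(u_n)$ ``together with $(f_3)$--$(f_4)$ yields coercivity on $\mathcal{P}_{\rho}$'' is unsubstantiated, and this is precisely the main obstacle the paper addresses. On $\mathcal{M}$ one only gets $J(u)=\frac12\int[f(u)u-4F(u)]\,dx\ge 0$; since $(f_3)$ is not the Ambrosetti--Rabinowitz condition, boundedness of $J(u_n)$ gives no control on $\|\nabla u_n\|_2$, let alone the quantitative smallness $\|\nabla u_n\|_2^2\le \pi/(2\alpha_0)$ needed for the uniform Trudinger--Moser bound (note also that your threshold $4\pi/\alpha_0$ is too generous: after the H\"older splitting of the exponential term one needs an exponent strictly larger than $\alpha_0$, hence a strictly smaller gradient bound). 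The paper needs three separate lemmas here: $c_{\rho}\le \beta^{-2/(p-4)}A_3\to 0$ as $\beta\to\infty$; a contradiction argument with a vanishing/non-vanishing dichotomy to get a crude bound $\|\nabla u_n\|_2^2\le M$; and then a delicate rescaling estimate producing $c_{\rho}+o_n(1)\ge \frac14 e^{2(s_0-\tilde s_k)}\|\nabla u_n\|_2^2$, from which smallness of $c_{\rho}$ transfers to smallness of $\|\nabla u_n\|_2^2$. Your proposal names the right target but supplies no mechanism for it, so as written the compactness step does not go through.
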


Set
\begin{eqnarray}\label{Mdef}
\mathcal{M}:=\{u\in H^{1}(\mathbb{R}^{2})\backslash \{0\}: \int_{\mathbb{R}^{2}}\vert\nabla u\vert^{2}dx=\int_{\mathbb{R}^{2}}H(u)dx\}.
\end{eqnarray}
Clearly, $\mathcal{M}$ contains all the nontrivial solutions of (\ref{1.1}). To prove Theorem \ref{Thm1.1}, \textcolor{blue}{as in} \cite{BM2020,MS-2022}, we will transform the search of minimizers for $J$ on $\mathcal{S}_{\rho} \cap \mathcal{M}$ into looking for minimizers for $J$ on $\mathcal{D}_{\rho} \cap \mathcal{M}$, where
\begin{eqnarray}\label{Drho}
\mathcal{D}_{\rho}:=\left\{u\in H^{1}(\mathbb{R}^{2})\setminus\{0\}: \int_{\mathbb{R}^{2}}u^{2}dx\leq \rho \right\}.
\end{eqnarray}
Since the nonlinearity is of exponential critical growth, it is challenging to recover the compactness of the minimizing sequences $\{u_n\}$ of $J$ on $\mathcal{D}_{\rho}\cap \mathcal{M}$. Due to the absence of the Ambrosetti-Rabinowitz condition, the main obstacle is to control the bound of $\{\|\nabla u_n\|_2\}$ under $(f_3)$. We will overcome this difficulty by introducing new estimates on the ground state energy. Using the new estimates, we can combine with the Trudinger-Moser inequality and the Strauss compactness lemma (see \cite{BL1983-1}) to obtain the important convergences of $F(u_n)\to F(u_0)$ and $f(u_n)u_n\to f(u_0)u_0$ in $L^1(\mathbb{R}^2)$. We note that in \cite{AJM2021}, the authors used the Trudinger-Moser inequality and a variant of the Lebesgue dominated convergence Theorem to show these convergences, where the assumptions $\alpha_0=4\pi$ in \eqref{critical} and $\rho\in(0,1)$ act as a vital part. However, our argument works for all $\alpha_0>0$ and $\rho>0$.

The paper is organized as follows. In Section 2, we introduce some preliminary
results. In Section 3, we give the proof of Theorem \ref{Thm1.1}. For simplicity, in the sequel, we denote by $C_1,  C_2, \cdots$ the positive constants that may be different in different places. Let $\|\cdot\|_{r}$ denote the usual norm of the Lebesgue space $L^r(\mathbb{R}^2)$ for $r\in[1,+\infty)$.

\section{Preliminaries}\label{sec2}
Firstly, we recall the following Trudinger-Moser inequality in $\mathbb{R}^2$ (see \cite{C1992,do1997}).
\begin{lemma}\label{TM}
If $\alpha>0$ and $u\in H^{1}(\mathbb{R}^{2})$, then
\begin{eqnarray*}
\int_{\mathbb{R}^{2}}\left(e^{\alpha u^2}-1\right)dx<\infty.
\end{eqnarray*}
Furthermore, if $\|\nabla u\|_2\le1$, $\|u\|_2\le M_1<+\infty$ with $M_1>0$ and $\alpha <4\pi$, then there exists a constant $C>0$, which depends only on $M_1$ and
$\alpha$, such that
 \begin{eqnarray*}
\int_{\mathbb{R}^{2}}\left(e^{\alpha u^2}-1\right)dx\le C\left(M_1,\alpha\right).
\end{eqnarray*}
\end{lemma}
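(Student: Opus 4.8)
The plan is to prove both assertions by splitting $\mathbb{R}^2$ into the region where $|u|$ is small and the region where it is large, and to reduce the large region to the classical Moser–Trudinger inequality on sets of finite measure. Throughout I would use the elementary decomposition over $A:=\{x:|u(x)|\le 1\}$ and $\Omega:=\{x:|u(x)|>1\}$. On $A$, since $s\mapsto e^{\alpha s}-1$ is convex and agrees with the linear function $(e^{\alpha}-1)s$ at $s=0$ and $s=1$, one has $e^{\alpha u^2}-1\le (e^{\alpha}-1)u^2$ wherever $u^2\le 1$, so $\int_A(e^{\alpha u^2}-1)\,dx\le (e^{\alpha}-1)\|u\|_2^2$. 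On $\Omega$, Chebyshev gives $|\Omega|\le \|u\|_2^2<\infty$, and I would introduce the truncation $\tilde u:=(|u|-1)_+\,\mathrm{sgn}(u)\in H^1(\mathbb{R}^2)$, which is supported in $\Omega$ and satisfies $\|\nabla\tilde u\|_2\le\|\nabla u\|_2$. Since $u^2=\tilde u^2+2|\tilde u|+1$ on $\Omega$, Young's inequality $2|\tilde u|\le \epsilon\tilde u^2+\epsilon^{-1}$ yields $\alpha u^2\le \alpha(1+\epsilon)\tilde u^2+\alpha(1+\epsilon^{-1})$, hence $\int_\Omega(e^{\alpha u^2}-1)\,dx\le e^{\alpha(1+\epsilon^{-1})}\int_\Omega e^{\alpha(1+\epsilon)\tilde u^2}\,dx$.

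For the finiteness statement (any $\alpha>0$, any $u\in H^1(\mathbb{R}^2)$, which we may assume nonconstant since constants in $H^1(\mathbb{R}^2)$ vanish), the contribution of $A$ is already finite, and for $\Omega$ it suffices to know that $\int_\Omega e^{\gamma\tilde u^2}\,dx<\infty$ for the \emph{fixed} function $\tilde u$ and the \emph{fixed} exponent $\gamma:=\alpha(1+\epsilon)$, with no smallness restriction on $\gamma$. This is exactly the exponential integrability furnished by the Moser–Trudinger embedding for a single function of finite-measure support, so the two contributions together give $\int_{\mathbb{R}^2}(e^{\alpha u^2}-1)\,dx<\infty$.

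For the uniform bound, I would run the same decomposition but now use $\|\nabla u\|_2\le 1$, $\|u\|_2\le M_1$ and, crucially, $\alpha<4\pi$. The integral over $A$ is bounded by $(e^{\alpha}-1)M_1^2$. On $\Omega$ I fix $\epsilon>0$ small enough that $\alpha(1+\epsilon)<4\pi$; then $\|\nabla\tilde u\|_2\le 1$ and Moser's \emph{sharp} inequality on a domain of measure $|\Omega|\le M_1^2$ give $\int_\Omega e^{\alpha(1+\epsilon)\tilde u^2}\,dx\le C|\Omega|\le C M_1^2$ with $C=C(\alpha)$, whence $\int_\Omega(e^{\alpha u^2}-1)\,dx\le e^{\alpha(1+\epsilon^{-1})}C M_1^2$. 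Collecting the two pieces produces a bound $C(M_1,\alpha)$ depending only on $M_1$ and $\alpha$, as claimed.

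The main obstacle, and the only place where the sharp threshold $4\pi$ intervenes, is the estimate on $\Omega$ in the second part: the cross term $2|\tilde u|$ coming from $u^2=\tilde u^2+2|\tilde u|+1$ must be absorbed into $\epsilon\tilde u^2$ without driving the effective exponent $\alpha(1+\epsilon)$ up to $4\pi$, which is possible precisely because the hypothesis $\alpha<4\pi$ is strict and leaves room to choose $\epsilon$. A secondary technical point is the legitimacy of invoking Moser's inequality on $\Omega$, which is only of finite measure and need not be bounded or have regular boundary; I would bypass this by Schwarz symmetrization, replacing $\tilde u$ by its radially decreasing rearrangement on a ball of the same measure, which does not increase $\|\nabla\tilde u\|_2$ and leaves $\int(e^{\gamma\tilde u^2}-1)\,dx$ unchanged, thereby placing us in the setting of Moser's theorem on a ball.
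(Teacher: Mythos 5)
The paper does not prove this lemma at all: it is quoted as a known form of the Trudinger--Moser inequality in $\mathbb{R}^2$ with citations to Cao and to do \'O, so there is no in-paper argument to compare against. Your proposal is a correct, self-contained reconstruction of exactly the classical proof from those references: split over $\{|u|\le 1\}$ and $\Omega=\{|u|>1\}$, use the chord bound $e^{\alpha s}-1\le(e^{\alpha}-1)s$ on $[0,1]$ for the small set, use Chebyshev to bound $|\Omega|$ by $\|u\|_2^2$, truncate to $\tilde u=(|u|-1)_+\,\mathrm{sgn}(u)$, absorb the cross term by Young's inequality, and invoke Moser on a set of finite measure (via symmetrization onto a ball, which correctly disposes of the regularity of $\Omega$ and the $H^1_0$ membership issue). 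The choice $\epsilon$ depending only on $\alpha$ so that $\alpha(1+\epsilon)<4\pi$ makes the final constant depend only on $\alpha$ and $M_1$, as required. The one step you leave as a black box is the first assertion's reduction to ``$\int_\Omega e^{\gamma\tilde u^2}\,dx<\infty$ for a fixed function and arbitrary $\gamma$'': this non-uniform exponential integrability is itself not the sharp Moser bound (which only covers $\gamma\le 4\pi$) and needs its own short argument, e.g.\ writing $\tilde u^*=v_1+v_2$ with $v_1$ bounded and $\|\nabla v_2\|_2$ so small that $\gamma(1+\delta)\|\nabla v_2\|_2^2<4\pi$, then estimating $\tilde u^{*2}\le(1+\delta)v_2^2+C_\delta\|v_1\|_\infty^2$. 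With that standard fact supplied, the proof is complete and is essentially the argument of the cited sources rather than an alternative route.
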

By \cite{W1983}, we have the following Gagliardo-Nirenberg inequality in $\mathbb{R}^2$.
\begin{lemma}\label{GN}
For any $r>2,$ there exists a best constant $C_{r,2}>0$ such that
\begin{eqnarray*}
\|u\|_{r}^{r}\le C_{r,2}\|u\|_2^2\|\nabla u\|_2^{r-2}, \forall u\in H^{1}(\mathbb{R}^{2}).
\end{eqnarray*}
\end{lemma}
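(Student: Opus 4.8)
The statement is the sharp Gagliardo--Nirenberg inequality in dimension two, and its essential content is the estimate with \emph{some} finite constant; once that is in hand, the best constant is automatically well defined as the supremum of the scale-invariant quotient. The plan is therefore to prove
\[
\|u\|_r^r \le C\,\|u\|_2^2\,\|\nabla u\|_2^{r-2}, \qquad u\in H^1(\mathbb{R}^2),
\]
for some $C=C(r)>0$, and then to set
\[
C_{r,2}:=\sup_{u\in H^1(\mathbb{R}^2)\setminus\{0\}}\frac{\|u\|_r^r}{\|u\|_2^2\,\|\nabla u\|_2^{r-2}},
\]
which is finite by the inequality and positive by testing any single nonzero function. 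This supremum is then by construction the smallest admissible constant.

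First I would reduce to $u\in C_c^\infty(\mathbb{R}^2)$ by density in $H^1(\mathbb{R}^2)$ together with Fatou's lemma, and record the two scalings that leave the quotient invariant: $u\mapsto\mu u$ and the dilation $u_\lambda(x):=u(\lambda x)$, for which $\|u_\lambda\|_r^r=\lambda^{-2}\|u\|_r^r$, $\|u_\lambda\|_2^2=\lambda^{-2}\|u\|_2^2$ and $\|\nabla u_\lambda\|_2=\|\nabla u\|_2$. This dimensional bookkeeping fixes the only admissible multiplicative form as
\[
\|u\|_r\le C\,\|\nabla u\|_2^{\theta}\,\|u\|_2^{1-\theta},\qquad \theta=\frac{r-2}{r},
\]
which upon raising to the $r$-th power is exactly the claim; any other choice of exponents would violate scale invariance, so the dilation computation both motivates the target form and certifies, a priori, that the exponents must collapse to $\|u\|_2^2\|\nabla u\|_2^{r-2}$.

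To establish the multiplicative estimate I would use the two-dimensional Sobolev embedding $\|w\|_2\le C_0\|\nabla w\|_1$ for $w\in C_c^\infty(\mathbb{R}^2)$, itself proved by the Loomis--Whitney slicing argument (bounding $|w(x_1,x_2)|$ by $\int_{\mathbb{R}}|\partial_1 w|\,dx_1$ and by $\int_{\mathbb{R}}|\partial_2 w|\,dx_2$, multiplying these one-variable bounds, and integrating via Fubini). Applying it to $w=|u|^{r/2}$ and then Cauchy--Schwarz gives
\[
\|u\|_r^{r/2}\le C_0\,\frac{r}{2}\int_{\mathbb{R}^2}|u|^{r/2-1}|\nabla u|\,dx\le C_1\,\|u\|_{r-2}^{(r-2)/2}\,\|\nabla u\|_2.
\]
The factor $\|u\|_{r-2}$ is then removed by H\"older interpolation: between $\|u\|_2$ and $\|u\|_r$ when $r\ge4$, followed by absorbing the resulting power of $\|u\|_r$ into the left-hand side; and between $\|u\|_2$ and $\|u\|_4$, with the $r=4$ case (Ladyzhenskaya's inequality, already the above estimate specialized to $r=4$) used to control $\|u\|_4$, when $2<r<4$. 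Collecting exponents yields the claim, and by the scaling remark this last step is pure bookkeeping. Alternatively one may simply quote the general Gagliardo--Nirenberg--Nirenberg interpolation inequality and verify that the relation $\tfrac1r=\theta(\tfrac12-\tfrac12)+(1-\theta)\tfrac12$ forces $\theta=(r-2)/r$.

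The genuinely delicate point is not the inequality but the sharpness in the sense of Weinstein \cite{W1983}: that the supremum defining $C_{r,2}$ is \emph{attained} and is computable in terms of the ground state of $-\Delta Q+Q=|Q|^{r-2}Q$. If that stronger statement were required, the main obstacle would be the loss of compactness caused by translation invariance of $\mathbb{R}^2$, which I would address by a concentration--compactness or Schwarz-symmetrization argument applied to a minimizing sequence of the quotient. For the use made of this lemma in the present paper, however, only the inequality with a finite constant is needed, so the attainment of the extremal can be deferred to the cited reference.
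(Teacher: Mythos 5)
The paper offers no proof of this lemma at all: it is simply quoted from Weinstein \cite{W1983}. Your self-contained derivation is therefore a genuinely different, and more elementary, route. The core of it is sound: the slicing proof of $\|w\|_{2}\le \|\nabla w\|_{1}$ applied to $w=|u|^{r/2}$, followed by Cauchy--Schwarz and interpolation, does yield $\|u\|_r^r\le C\|u\|_2^2\|\nabla u\|_2^{r-2}$, the dilation bookkeeping correctly certifies the exponents, and defining $C_{r,2}$ as the supremum of the scale-invariant quotient (finite by the inequality, positive by testing one function) is exactly what ``best constant'' means here. You are also right about what is and is not needed: the paper uses only the finiteness of the constant (in Lemma \ref{pro} and Lemma \ref{bound}), so the attainment of the supremum by the ground state of $-\Delta Q+Q=|Q|^{r-2}Q$ --- which is the actual content of \cite{W1983} --- can legitimately be left to the reference; that sharpness is what the citation buys and what your argument does not, and need not, reproduce.

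One step is misstated, although the fix is essentially contained in your own text. For $2<r<4$ the factor produced by Cauchy--Schwarz is $\|u\|_{r-2}$ with $r-2<2$, and this cannot be ``removed by interpolation between $\|u\|_2$ and $\|u\|_4$'': interpolation only reaches exponents between $2$ and $4$, and in fact $\|u\|_{r-2}$ may be infinite for $u\in H^{1}(\mathbb{R}^{2})$, so the slicing estimate for exponent $r$ is unusable in this range. Instead, for $2<r<4$ you should interpolate $\|u\|_r$ itself between $\|u\|_2$ and $\|u\|_4$, namely $\|u\|_r\le\|u\|_2^{1-\theta}\|u\|_4^{\theta}$ with $\tfrac1r=\tfrac{1-\theta}{2}+\tfrac{\theta}{4}$, i.e. $\theta=\tfrac{2(r-2)}{r}$, and then control $\|u\|_4$ by your $r=4$ (Ladyzhenskaya) case $\|u\|_4^4\le C\|u\|_2^2\|\nabla u\|_2^2$; collecting exponents gives $\|u\|_r\le C'\|u\|_2^{2/r}\|\nabla u\|_2^{(r-2)/r}$, which is the claim after raising to the $r$-th power. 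With that correction (and the density/Fatou reduction you already noted), the proof is complete.
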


The following observation will be useful.
\begin{lemma}\label{mono} Assume that $(f_1)-(f_3)$ hold. Then
\begin{description}
\item[(i)] $\frac{H(t)}{t^{4}}$ is nonincreasing on $(-\infty,0)$ and nondecreasing on $(0,+\infty)$;
\item[(ii)]$g(t):=\frac{f(t)t-4F(t)}{t^2}$ is nonincreasing on $(-\infty,0)$ and nondecreasing on $(0,+\infty)$;
\item[(iii)] $f(t)t\ge4F(t), \forall t\in \mathbb{R}$.
\end{description}
\end{lemma}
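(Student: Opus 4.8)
The plan is to reduce all three claims to the single inequality $(f_3)$, exploiting the observation that the derivatives of the two quotients appearing in (i) and (ii) share the same numerator. First I would record that, since $H(t)=f(t)t-2F(t)$, one has $h(t)=H'(t)=f'(t)t-f(t)$, and hence the key identity
\[
h(t)t-4H(t)=f'(t)t^2-5f(t)t+8F(t),
\]
so that $(f_3)$ is exactly the statement $f'(t)t^2-5f(t)t+8F(t)\ge0$ for all $t$. I would also note that $(f_2)$ forces $f(0)=0$, whence $F(0)=H(0)=0$.

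For (i), differentiating $\phi(t):=H(t)t^{-4}$ on $\{t\ne0\}$ by the quotient rule gives $\phi'(t)=\big(h(t)t-4H(t)\big)t^{-5}$. By $(f_3)$ the numerator is nonnegative, so the sign of $\phi'(t)$ is that of $t^{-5}$: positive for $t>0$ and negative for $t<0$. This yields that $\phi$ is nondecreasing on $(0,+\infty)$ and nonincreasing on $(-\infty,0)$. For (ii), the same computation for $g(t)=\big(f(t)t-4F(t)\big)t^{-2}$ produces $g'(t)=\big(h(t)t-4H(t)\big)t^{-3}$, whose numerator is again nonnegative by $(f_3)$; since now the sign is that of $t^{-3}$, I obtain that $g$ is nondecreasing on $(0,+\infty)$ and nonincreasing on $(-\infty,0)$. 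Thus (i) and (ii) are immediate once the common numerator $h(t)t-4H(t)$ is identified with the left-hand side of $(f_3)$.

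For (iii) I would deduce the pointwise inequality from the monotonicity in (ii) together with the behavior of $g$ at the origin. Using $(f_2)$, i.e. $f(t)=o(|t|^3)$ as $t\to0$, one gets $f(t)t=o(t^4)$ and $F(t)=o(t^4)$, so that $g(t)=o(t^2)\to0$ as $t\to0$. Since $g$ is nondecreasing on $(0,+\infty)$, its infimum there is the right limit $\lim_{t\to0^+}g(t)=0$, giving $g(t)\ge0$ for $t>0$; since $g$ is nonincreasing on $(-\infty,0)$, its infimum there is the left limit $\lim_{t\to0^-}g(t)=0$, giving $g(t)\ge0$ for $t<0$. Hence $f(t)t-4F(t)=t^2g(t)\ge0$ for all $t\ne0$, and the inequality holds trivially at $t=0$, which is (iii).

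There is no substantial obstacle here: the entire lemma rests on the elementary but crucial algebraic fact that both quotient derivatives collapse to $h(t)t-4H(t)$ over an odd power of $t$. The only points requiring care are the bookkeeping of signs, since the odd powers $t^{-5}$ and $t^{-3}$ flip the monotonicity direction across $t=0$, and the limit computation at the origin, where $(f_2)$ must be invoked to guarantee $g(0^{\pm})=0$ so that the monotonicity in (ii) can be converted into the sign information needed for (iii).
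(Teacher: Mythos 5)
Your proposal is correct and follows essentially the same route as the paper: both reduce (i) and (ii) to the common numerator $h(t)t-4H(t)\ge 0$ from $(f_3)$ via the quotient rule, and both obtain (iii) from the monotonicity of $g$ together with its vanishing at the origin (the paper phrases this as continuity of the extension $g_2$ with $g_2(0)=0$, which is exactly the limit computation you carry out using $(f_2)$).
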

\begin{proof}
 Define $g_1(t):= \frac{H(t)}{t^4}$. Recall that $H(t)=f(t)t-2F(t)$ is $C^1$ and $h(t)=H'(t)$, it follows that $g'_1(t)=\frac{h(t)t-4H(t)}{t^5}$. Since $f\in C^1$, by $(f_3)$ we get $(i)$.
Using $g'(t)=\frac{h(t)t-4H(t)}{t^3}$, we obtain $g'(t)\le 0$ for $t<0$, $g'(t)\ge0$ for $t>0$. Hence (ii) holds.
For (iii), we define
 \begin{eqnarray*}g_2(t)= \left\{
\begin{array}{ll}
\frac{f(t)t-4F(t)}{t^2}~~&\mbox{for} ~~ t\neq 0,\\
0~~&\mbox{for}~~~t=0.
\end{array}
\right.
\end{eqnarray*}
 By $(f_1)-(f_3)$, we can see that $g_2$ is a continuous function on $\mathbb{R}$. In view of (ii), it follows that $g_2(t)\ge 0, \forall t\in \mathbb{R}$, which implies (iii).
\end{proof}

\begin{lemma}\label{asymp}
Assume $(f_1)-(f_4)$ hold. Set $s\star u(\cdot) = e^{s}u(e^s \cdot)$. The functional $J$ is defined by \eqref{FunctionalJ}. Then, for any $u\in H^1(\mathbb{R}^2)\setminus \{0\}$, we have
 \begin{description}
\item[(i)]$J\left(s\star u\right)\to 0^+$ as $s\to-\infty$;
\item[(ii)]$J\left(s\star u\right)\to-\infty$ as $s\to+\infty$.
\end{description}
\end{lemma}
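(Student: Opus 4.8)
The plan is to reduce both limits to explicit scaling identities and then read them off from the competition between the kinetic and potential parts of $J$. Since $N=2$, the map $s\star u(x)=e^{s}u(e^{s}x)$ preserves mass, $\|s\star u\|_2=\|u\|_2$, and the change of variables $y=e^{s}x$ gives $\|\nabla(s\star u)\|_2^2=e^{2s}\|\nabla u\|_2^2$ together with $\int_{\mathbb{R}^2}F(s\star u)\,dx=e^{-2s}\int_{\mathbb{R}^2}F(e^{s}u)\,dx$. Thus
\begin{equation*}
J(s\star u)=\frac{1}{2}e^{2s}\|\nabla u\|_2^2-e^{-2s}\int_{\mathbb{R}^2}F(e^{s}u)\,dx ,
\end{equation*}
and everything comes down to comparing the potential term with $e^{2s}$ as $s\to-\infty$ and with the kinetic term as $s\to+\infty$.

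For (ii) I would use the lower bound $F(t)\ge\frac{\beta}{p}|t|^{p}$ coming from $(f_4)$, which yields $\int_{\mathbb{R}^2}F(e^{s}u)\,dx\ge\frac{\beta}{p}e^{ps}\|u\|_p^{p}$ and hence
\begin{equation*}
J(s\star u)\le e^{2s}\Big(\frac{1}{2}\|\nabla u\|_2^2-\frac{\beta}{p}e^{(p-4)s}\|u\|_p^{p}\Big).
\end{equation*}
Because $p>4$ and $u\neq0$ (so $\|u\|_p>0$, using $H^1(\mathbb{R}^2)\hookrightarrow L^p$), the bracket tends to $-\infty$ as $s\to+\infty$, giving $J(s\star u)\to-\infty$. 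This part is routine.

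The substance of the lemma is (i), where the difficulty is the exponential critical growth of $F$: I must show not merely that the potential term vanishes but that it is $o(e^{2s})$, i.e.\ that $e^{-4s}\int_{\mathbb{R}^2}F(e^{s}u)\,dx\to0$. Combining $(f_2)$ near the origin with the exponential critical bound in $(f_1)$, for every $\varepsilon>0$ and every $\alpha>\alpha_0$ there is $C_\varepsilon>0$ with $F(t)\le\varepsilon|t|^{4}+C_\varepsilon|t|^{4}\big(e^{\alpha t^2}-1\big)$ for all $t$; the polynomial factor $|t|^4$ is essential, since it supplies exactly the power of $e^{s}$ needed to cancel the $e^{-4s}$ normalization. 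Inserting $t=e^{s}u$ and dividing by $e^{4s}$ gives
\begin{equation*}
e^{-4s}\int_{\mathbb{R}^2}F(e^{s}u)\,dx\le\varepsilon\|u\|_4^{4}+C_\varepsilon\int_{\mathbb{R}^2}u^{4}\big(e^{\alpha e^{2s}u^2}-1\big)\,dx .
\end{equation*}
For $s\le0$ the last integrand is dominated by $u^4(e^{\alpha u^2}-1)$, which lies in $L^1(\mathbb{R}^2)$ by H\"older's inequality together with the Trudinger--Moser inequality of Lemma \ref{TM}, while it tends to $0$ pointwise as $s\to-\infty$; dominated convergence then kills this term. Letting $\varepsilon\downarrow0$ afterwards gives $e^{-4s}\int_{\mathbb{R}^2}F(e^{s}u)\,dx\to0$, so that $J(s\star u)=e^{2s}\big(\frac12\|\nabla u\|_2^2-o(1)\big)\to0^{+}$. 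The only delicate point is this uniform control of the exponential term, and it is precisely where the Trudinger--Moser inequality enters; the rest is bookkeeping.
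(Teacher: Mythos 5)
Your argument is correct, and on the essential point (i) it takes a genuinely different route from the paper's. Both proofs start from the same splitting of $F$ into $\varepsilon|t|^4$ plus a polynomial-times-exponential remainder, but you choose the exponent $4$ on the polynomial factor so that it exactly cancels the $e^{-4s}$ normalization, and then dispose of the remaining integral $\int_{\mathbb{R}^2}u^4\big(e^{\alpha e^{2s}u^2}-1\big)dx$ by dominated convergence, the dominating function $u^4(e^{\alpha u^2}-1)$ being integrable by H\"older and only the \emph{qualitative} first part of Lemma \ref{TM}. The paper instead takes the exponent $q+1$ with $q>p>4$, applies H\"older to peel off a factor $e^{qs}\|u\|_{2q+2}^{q+1}$ that decays strictly faster than $e^{2s}$, and controls the Trudinger--Moser factor \emph{uniformly} in $s\le s_0$ by rescaling to a function $v$ with $\|\nabla v\|_2\le 1$ and invoking the quantitative second part of Lemma \ref{TM}. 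Your version is leaner for the purposes of this lemma; the paper's version has the advantage of producing the explicit quantitative inequality \eqref{5-3-1}, whose scheme is reused verbatim in Lemma \ref{pro}(iii) and Lemmas \ref{upper-bound}--\ref{bound}, where a rate (not just convergence) is needed. For (ii) you integrate $(f_4)$ to get the global bound $F(t)\ge\frac{\beta}{p}|t|^p$, which is more direct than the paper's large-$|t|$/small-$|t|$ splitting and is in fact exactly the estimate the paper itself uses later in Lemma \ref{energy-2}; both are fine. One small point of hygiene: state your splitting estimate for $|F(t)|$ rather than $F(t)$ (the same proof gives it), since to conclude $J(s\star u)\to 0^{+}$ you need $e^{-4s}\int_{\mathbb{R}^2}F(e^{s}u)\,dx$ to tend to $0$ from both sides, not merely $\limsup\le 0$.
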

\begin{proof}~
For (i), by $(f_1)-(f_2)$, there exist $q>p$ and $\alpha>\alpha_0$ such that, for any $\epsilon>0$, there exists $C_\epsilon>0$ such that
    \begin{eqnarray*}\label{5-6-1}
\vert F(t)\vert\le \epsilon \vert t\vert^{4}+C_{\epsilon}\vert t\vert^{q+1} \left(e^{\alpha \vert t\vert^{2}}-1\right),   \  \forall t\in \mathbb{R}.
    \end{eqnarray*}
Then, by $(e^s-1)^r\le e^{sr}-1$ for $r>1$ and $s\ge0$, and the H\"older inequality, we get
\begin{eqnarray*}
\int_{\mathbb{R}^{2}}\vert F(u)\vert dx&\le& \epsilon\int_{\mathbb{R}^{2}}\vert u\vert^4dx+C_{\epsilon}\int_{\mathbb{R}^{2}}\vert u\vert^{q+1} \left(e^{\alpha \vert u\vert^{2}}-1\right)dx\\
&\le&\epsilon\int_{\mathbb{R}^{2}}\vert u\vert^4dx+C_{\epsilon}\left(\int_{\mathbb{R}^2}\vert u\vert^{2q+2}\right)^{\frac{1}{2}}\left(\int_{\mathbb{R}^{2}}\left(e^{2\alpha \vert u\vert^{2}}-1\right)dx\right)^{\frac{1}{2}}.
    \end{eqnarray*}
Define $v(x):= \sqrt{\frac{\alpha}{\pi}}e^{s}u(e^{s}x)$. Then
\begin{eqnarray*}
\int_{\mathbb{R}^{2}}\vert\nabla v\vert^{2}dx=e^{2s}\frac{\alpha}{\pi}\int_{\mathbb{R}^{2}}\vert\nabla u\vert^{2}dx\to 0~~ \ \mathrm{as}\ s\to-\infty
 \end{eqnarray*}
 and
 $$
 \int_{\mathbb{R}^{2}}\vert v\vert^{2}dx=\frac{\alpha}{\pi}\int_{\mathbb{R}^{2}}\vert u\vert^{2}dx.
 $$
 By Lemma \ref{TM}, there exists $s_0<0$ with $\vert s_0\vert$ large enough such that, for all $s\le s_0$,
\begin{eqnarray*}
\int_{\mathbb{R}^2}\left(e^{2\alpha \vert e^{s}u(e^{s}x)\vert^2}-1\right)dx=\int_{\mathbb{R}^2}\left(e^{2\pi \vert v\vert^2}-1\right)dx\le C_1
\end{eqnarray*}
for some $C_1>0$. Hence, for $s\le s_0$,
 \begin{eqnarray}
    		\int_{\mathbb{R}^{2}}\vert F(e^{s}u(e^{s}x))\vert dx \le \epsilon e^{2s}\int_{\mathbb{R}^{2}}\vert u(x)\vert^{4}dx+C_{\epsilon}C_1^{\frac{1}{2}} e^{qs}\left(\int_{\mathbb{R}^{2}}\vert u(x)\vert^{2q+2}dx\right)^{\frac{1}{2}}.\label{5-3-1}
    \end{eqnarray}	
Then
 $$
    \begin{aligned}
    	J\left(s\star u\right)
    	&=\frac{1}{2}\int_{\mathbb{R}^{2}}\vert\nabla(e^{s}u(e^{s}x))\vert^{2}dx-\int_{\mathbb{R}^{2}}F(e^{s}u(e^{s}x))dx\\
    	&\ge \frac{1}{2}e^{2s}\int_{\mathbb{R}^{2}}\vert \nabla u\vert^{2}dx-\epsilon e^{2s}\int_{\mathbb{R}^{2}}\vert u(x)\vert^{4}dx-C_{\epsilon}C_1^{\frac{1}{2}} e^{qs}\left(\int_{\mathbb{R}^{2}}\vert u(x)\vert^{2q+2}dx\right)^{\frac{1}{2}}.
    \end{aligned}
    $$
By $q>p>4$, taking $\epsilon>0$ small if necessary, we can obtain that there exists $s_1\le s_0$ such that $J\left(s\star u\right)\ge0$ for $s\le s_1$. Furthermore, by (\ref{5-3-1}) it follows that
    \begin{eqnarray*}
    		\int_{\mathbb{R}^{2}}F\left(e^{s}u(e^{s}x)\right)dx\to 0~~\mbox{as}~s\to -\infty,
    \end{eqnarray*}	
which implies that $J\left(s\star u\right)\to0^+$ as $s\to-\infty$.

 For (ii), by $(f_1)-(f_2)$ and $(f_4)$, there exist $r>4, t_0\in (0, 1], C_2, C_3>0$ such that
 \begin{eqnarray*}
&&F(t)\ge C_2\vert t\vert^{r}, \forall \vert t\vert\ge t_0,\\
&&F(t)\le C_3\vert t\vert^2, \forall \vert t\vert\le t_0.
\end{eqnarray*}
Then, for some $C_4>0$,
\begin{eqnarray*}
\int_{\mathbb{R}^2}F(u)dx&=&\int_{\left\{\vert u(x)\vert\ge t_0\right\}}F(u)dx+\int_{\{\vert u(x)\vert<t_0\}}F(u)dx\\
&\ge&C_2\int_{\{\vert u(x)\vert\ge t_0\}}\vert u\vert^{r}dx-C_3\int_{\{\vert u(x)\vert<t_0\}}\vert u\vert^2dx\\
&=&C_2\int_{\mathbb{R}^2}\vert u\vert^{r}dx-\int_{\{\vert u(x)\vert<t_0\}}\left[C_2\vert u\vert^{r}+C_3\vert u\vert^2\right]dx\\
&\ge&C_2\int_{\mathbb{R}^2}\vert u\vert^{r}dx-C_4\int_{\mathbb{R}^2}\vert u\vert^2dx,
\end{eqnarray*}
which implies that
 \begin{eqnarray*}
 J\left(s\star u\right)
\le \frac{1}{2}e^{2s}\int_{\mathbb{R}^{2}}\vert\nabla u\vert^{2}dx+C_4\int_{\mathbb{R}^{2}}\vert u\vert^{2}dx-C_2e^{(r-2)s}\int_{\mathbb{R}^{2}}\vert u\vert^{r}dx.
    \end{eqnarray*}
By $r>4,$ it follows that $J\left(s\star u\right)\to-\infty$ as $s\to +\infty$.
  \end{proof}

Define the Nehari-Pohozaev functional
\begin{eqnarray*}
\mathcal{P}(u) :=\int_{\mathbb{R}^{2}}\vert \nabla u\vert^{2}dx-\int_{\mathbb{R}^{2}}H(u)dx, \qquad \forall u\in H^1(\mathbb{R}^2).
\end{eqnarray*}
We have the following results.
\begin{lemma}\label{pro}Assume $(f_1)-(f_3)$ hold. Then
 \begin{description}
\item[(i)]for any $u\in H^1(\mathbb{R}^2)\setminus \{0\}$, there exists a number $s_u\in \mathbb{R}$ such that $\mathcal{P}\left(s_u\star u\right)=0$ and
\begin{eqnarray}
J\left(s_u\star u\right)\ge J\left(s\star u\right), \qquad \forall s\neq s_u.
\end{eqnarray}
Specially, if $u\in \mathcal{M}$, then $J(u)=\max\limits_{s\in \mathbb{R}}J\left(s\star u\right)$;
\item[(ii)]$\mathcal{D}_{\rho}\cap\mathcal{M}\neq\emptyset$;
\item[(iii)]there exists $\delta_0>0$ such that $\inf\limits_{u\in \mathcal{D}_{\rho}\cap\mathcal{M}}\|\nabla u\|_2\ge\delta_0$.
\end{description}
\end{lemma}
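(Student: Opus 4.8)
The plan is to exploit the fibering scaling $s\star u=e^{s}u(e^{s}\cdot)$, which preserves the $L^2$-norm, $\|s\star u\|_2=\|u\|_2$, while $\|\nabla(s\star u)\|_2^2=e^{2s}\|\nabla u\|_2^2$. Writing $\phi(s):=J(s\star u)$, a change of variables $y=e^{s}x$ gives
$$\phi(s)=\tfrac12 e^{2s}\|\nabla u\|_2^2-e^{-2s}\int_{\mathbb{R}^2}F(e^{s}u)\,dy,$$
and differentiating under the integral yields the identity $\phi'(s)=\mathcal{P}(s\star u)$. Since $H(0)=0$, I would then factor
$$\mathcal{P}(s\star u)=e^{2s}\Big(\|\nabla u\|_2^2-\int_{\mathbb{R}^2}\frac{H(e^{s}u)}{(e^{s}u)^4}\,u^4\,dy\Big)=:e^{2s}\psi(s),$$
with the integrand understood to vanish on $\{u=0\}$. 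Note $\int_{\mathbb{R}^2}H(u)\,dx<\infty$ for every $u\in H^1(\mathbb{R}^2)$ by $(f_1)$–$(f_2)$ and Lemma \ref{TM}, so all quantities are finite.

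For (i), the sign of $\phi'$ is the sign of $\psi$. By Lemma \ref{mono}(i), $t\mapsto H(t)/t^4$ is nondecreasing in $|t|$, so $s\mapsto \frac{H(e^{s}u)}{(e^{s}u)^4}$ is nondecreasing for each fixed $y$ with $u(y)\neq0$; hence $\psi$ is continuous and nonincreasing. As $s\to-\infty$, $(f_2)$ forces $H(t)/t^4\to0$ as $t\to0$, and since $\frac{H(e^{s}u)}{(e^{s}u)^4}u^4\le H(u)\in L^1$ for $s\le0$, dominated convergence gives $\psi(s)\to\|\nabla u\|_2^2>0$. As $s\to+\infty$, I would first check that $H(t)/t^4\to+\infty$ as $|t|\to\infty$ — this follows from Lemma \ref{mono}(iii), which gives $H(t)\ge\tfrac12 f(t)t$, together with the exponential critical growth \eqref{critical} — so monotone convergence yields $\psi(s)\to-\infty$. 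Consequently $\psi$ has a zero $s_u$, where $\mathcal{P}(s_u\star u)=0$, and $\phi'\ge0$ on $(-\infty,s_u]$, $\phi'\le0$ on $[s_u,+\infty)$; thus $s_u$ is a global maximum of $\phi$, which is exactly $J(s_u\star u)\ge J(s\star u)$. If $u\in\mathcal{M}$ then $\psi(0)=0$, so $0$ is a maximum point and $J(u)=\max_{s\in\mathbb{R}}J(s\star u)$.

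Part (ii) is then immediate: choose any $w\in H^1(\mathbb{R}^2)\setminus\{0\}$ with $\|w\|_2^2\le\rho$; by (i) there is $s_w$ with $s_w\star w\in\mathcal{M}$, and since the scaling preserves the $L^2$-norm, $\|s_w\star w\|_2^2=\|w\|_2^2\le\rho$, whence $s_w\star w\in\mathcal{D}_\rho\cap\mathcal{M}$.

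For (iii) I argue by a direct lower estimate. From $(f_2)$ near $0$ and $(f_1)$ at infinity, for any $\epsilon>0$ and suitable $q>3$, $\alpha>\alpha_0$ there is $C_\epsilon>0$ with $H(t)\le\epsilon|t|^4+C_\epsilon|t|^{q}(e^{\alpha t^2}-1)$. For $u\in\mathcal{D}_\rho\cap\mathcal{M}$, using $\|\nabla u\|_2^2=\int H(u)$, the Gagliardo–Nirenberg inequality (Lemma \ref{GN}) applied to $\|u\|_4^4$ and $\|u\|_{2q}^{2q}$, and Hölder's inequality,
$$\|\nabla u\|_2^2\le\epsilon C_{4,2}\rho\,\|\nabla u\|_2^2+C_\epsilon\|u\|_{2q}^{q}\Big(\int_{\mathbb{R}^2}(e^{2\alpha u^2}-1)\,dx\Big)^{1/2}.$$
The delicate point — and what I expect to be the main obstacle — is bounding the Trudinger–Moser factor uniformly for general $\rho$ and $\alpha_0$, since the coefficient $2\alpha$ may exceed $4\pi$ and the naive normalization $u/\|\nabla u\|_2$ makes the $L^2$-mass blow up as $\|\nabla u\|_2\to0$. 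I would circumvent this exactly as in Lemma \ref{asymp}, by the \emph{constant} rescaling $v:=\sqrt{2\alpha/\gamma}\,u$ with a fixed $\gamma\in(0,4\pi)$: then $e^{2\alpha u^2}=e^{\gamma v^2}$, $\|v\|_2^2\le\frac{2\alpha}{\gamma}\rho$ is bounded independently of $u$, and $\|\nabla v\|_2\le1$ once $\|\nabla u\|_2$ lies below a threshold, so Lemma \ref{TM} gives $\int_{\mathbb{R}^2}(e^{2\alpha u^2}-1)\,dx\le C(\rho,\alpha,\gamma)$. Combined with $\|u\|_{2q}^{q}\le C\sqrt\rho\,\|\nabla u\|_2^{q-1}$ from Lemma \ref{GN} and a choice of $\epsilon$ so small that $\epsilon C_{4,2}\rho\le\tfrac12$, this yields $\tfrac12\|\nabla u\|_2^2\le C_6\|\nabla u\|_2^{q-1}$, hence $\|\nabla u\|_2\ge(2C_6)^{-1/(q-3)}$ below the threshold; together with the threshold value this produces the uniform bound $\delta_0>0$.
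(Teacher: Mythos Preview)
Your proof is correct. For (ii) and (iii) you follow essentially the paper's route: the paper phrases (iii) as a contradiction (assume $\|\nabla u_n\|_2\to0$ and then bound the Trudinger--Moser integral via Lemma~\ref{TM}), while you give the direct threshold version, but the underlying estimate --- Gagliardo--Nirenberg plus the constant rescaling $v=\sqrt{2\alpha/\gamma}\,u$ to force the exponent below $4\pi$ --- is identical (the paper uses this rescaling explicitly in Lemma~\ref{asymp} and only cites Lemma~\ref{TM} in (iii), so you have in fact spelled out a step the paper leaves implicit).

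The genuine difference is in (i). The paper simply invokes Lemma~\ref{asymp} to obtain $\phi(s)\to 0^+$ at $-\infty$ and $\phi(s)\to-\infty$ at $+\infty$, hence a global maximum $s_u$ exists with $\phi'(s_u)=\mathcal P(s_u\star u)=0$; it then asserts that (i) follows without further comment on the ``Specially'' clause. Your route instead factors $\phi'(s)=e^{2s}\psi(s)$ and uses the monotonicity of $H(t)/t^4$ from Lemma~\ref{mono}(i) to make $\psi$ nonincreasing, with the limits handled by dominated/monotone convergence. This buys two things. First, it proves (i) under $(f_1)$--$(f_3)$ alone, matching the hypotheses of the lemma, whereas Lemma~\ref{asymp} as stated assumes $(f_1)$--$(f_4)$, so the paper's argument tacitly leans on $(f_4)$. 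Second, the monotonicity of $\psi$ shows at once that \emph{every} zero of $\psi$ is a global maximizer of $\phi$; in particular $s=0$ is a maximizer whenever $u\in\mathcal M$, which delivers the ``Specially'' assertion cleanly --- a point the paper's proof leaves to the reader.
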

\begin{proof}
By Lemma \ref{asymp}, we can see that $\phi_u(s):=J\left(s\star u\right)$ admits a global maximum at some $s_u\in \mathbb{R}$. In view of $\frac{d}{ds}J\left(s\star u\right)=\mathcal{P}\left(s\star u\right)$, it follows that (i) holds. Take $u\in \mathcal{D}_{\rho}$, where $\mathcal{D}_{\rho}$ is given by \eqref{Drho}. By (i), there is $s_u\in \mathbb{R}$ such that $s_u\star u \in \mathcal{M}$, where is $\mathcal{M}$ given by \eqref{Mdef}. Note that $s_u\star u \in \mathcal{D}_{\rho}$, we get (ii).

For (iii), by contradiction, we may assume that there exists a sequence $\{u_n\}\subset \mathcal{D}_{\rho}\cap\mathcal{M}$ such that $\|\nabla u_n\|_2\to 0$ as $n\to+\infty$. Then, by Lemma \ref{TM}, there exists $C_1>0$ such that
$$
\int_{\mathbb{R}^{2}}\left(e^{2\alpha \vert u_n\vert^{2}}-1\right)dx\le C_1.
$$
Using similar arguments as in Lemma \ref{asymp}, by Lemma \ref{GN}, there exist $q>p, C_2>0$, and for any $\epsilon>0$ a $C_{\epsilon}>0$ such that
    \begin{eqnarray*}
\int_{\mathbb{R}^{2}}\vert f(u_n)u_n\vert dx&\le&\epsilon \int_{\mathbb{R}^{2}}\vert u_n\vert^{4}dx +C_{\epsilon}\left(\int_{\mathbb{R}^2}\vert u_n\vert^{2q+2}\right)^{\frac{1}{2}}\left(\int_{\mathbb{R}^{2}}\left(e^{2\alpha \vert u_n\vert^{2}}-1\right)dx\right)^{\frac{1}{2}}\\
&\le&\epsilon \, C_2\rho \int_{\mathbb{R}^{2}}\vert \nabla u_n\vert^{2}dx+C_{\epsilon}C_1^{\frac{1}{2}}C_2 \rho^{\frac{1}{2}} \left(\int_{\mathbb{R}^{2}}\vert \nabla u_n\vert^{2}dx\right)^{\frac{q}{2}}.
    \end{eqnarray*}
    Similarly,
        \begin{eqnarray*}
\int_{\mathbb{R}^{2}}\vert F(u_n)\vert dx\le\epsilon \, C_2\rho \int_{\mathbb{R}^{2}}\vert \nabla u_n\vert^{2}dx+C_{\epsilon}C_1^{\frac{1}{2}}C_2 \rho^{\frac{1}{2}} \left(\int_{\mathbb{R}^{2}}\vert \nabla u_n\vert^{2}dx\right)^{\frac{q}{2}}.
    \end{eqnarray*}
    Then, taking $\epsilon=\frac{1}{4C_2\rho}$, we get
$$
\begin{aligned}
    	\int_{\mathbb{R}^{2}}\vert \nabla u_n\vert^{2}dx
    	&=\int_{\mathbb{R}^{2}}\left[f(u_{n})u_n-2F(u_{n})\right]dx\\
    	   &\le \frac{1}{2}\int_{\mathbb{R}^{2}}\vert \nabla u_n\vert^{2}dx+2C_{\epsilon}C_1^{\frac{1}{2}}C_2 \rho^{\frac{1}{2}} \left(\int_{\mathbb{R}^{2}}\vert \nabla u_n\vert^{2}dx\right)^{\frac{q}{2}},
    	\end{aligned}
    	$$
which implies that $\frac{1}{2}\leq C_{\epsilon}C_1^{\frac{1}{2}}C_2 \rho^{\frac{1}{2}} \|\nabla u_{n}\|_{2}^{q-2}$.
By $q>4$, we obtain a contradiction. Hence (iii) holds.
\end{proof}

Denote
\begin{eqnarray*}
&&c_{\rho}:= \min\limits_{u\in\mathcal{D}_{\rho} \cap \mathcal{M}}J(u),\\
&&c_{\rho,r}:= \min\limits_{u\in\mathcal{D}_{\rho} \cap \mathcal{M} \cap H_r^1(\mathbb{R}^2)}J(u).
\end{eqnarray*}
\begin{lemma}\label{radial}
Assume that $(f_1)-(f_3)$ hold. Then $c_{\rho}=c_{\rho,r}$.
\end{lemma}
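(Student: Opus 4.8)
The plan is to prove the two inequalities $c_{\rho,r}\ge c_\rho$ and $c_{\rho,r}\le c_\rho$ separately. The first is immediate: since $\mathcal{D}_\rho\cap\mathcal{M}\cap H^1_r(\mathbb{R}^2)\subseteq \mathcal{D}_\rho\cap\mathcal{M}$, the infimum of $J$ over the smaller set cannot be smaller, so $c_{\rho,r}\ge c_\rho$. The whole content of the lemma is therefore the reverse inequality $c_{\rho,r}\le c_\rho$, and to obtain it I would show that every competitor $u\in\mathcal{D}_\rho\cap\mathcal{M}$ can be replaced by a radial one with no larger energy. The natural device is Schwarz symmetrization combined with the scaling/fiber analysis already encoded in Lemma \ref{pro}.

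Concretely, fix $u\in\mathcal{D}_\rho\cap\mathcal{M}$ and let $w:=u^{\ast}$ be the symmetric decreasing rearrangement of $|u|$. Three classical facts about rearrangements are what I need. First, $w$ is radial, nonnegative and nonzero, and it is equimeasurable with $|u|$, so $\|w\|_2=\|u\|_2$; in particular $w\in\mathcal{D}_\rho$. Second, the Pólya--Szegő inequality gives $\|\nabla w\|_2\le\|\nabla u\|_2$. Third, since $F$ is even (so that $F(|u|)=F(u)$) and rearrangement preserves the distribution function, $\int_{\mathbb{R}^2}F(w)\,dx=\int_{\mathbb{R}^2}F(u)\,dx$; moreover the same identity survives after any dilation.

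Now I invoke Lemma \ref{pro}(i): applied to $w\ne0$ it produces $s_w\in\mathbb{R}$ with $s_w\star w\in\mathcal{M}$ and $J(s_w\star w)=\max_{s}J(s\star w)$. Because the dilation $s\star\,\cdot$ preserves the $L^2$-norm, $s_w\star w\in\mathcal{D}_\rho$, and it is radial, so $s_w\star w\in\mathcal{D}_\rho\cap\mathcal{M}\cap H^1_r(\mathbb{R}^2)$. The key estimate is the fiberwise comparison $J(s\star w)\le J(s\star u)$ for every $s$: indeed $s\star w$ is exactly the symmetric decreasing rearrangement of $|s\star u|$ (symmetrization commutes with the $L^2$-preserving dilation), whence $\int F(s\star w)=\int F(s\star u)$, while $\|\nabla(s\star w)\|_2^2=e^{2s}\|\nabla w\|_2^2\le e^{2s}\|\nabla u\|_2^2=\|\nabla(s\star u)\|_2^2$. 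Taking the maximum over $s$ and using $u\in\mathcal{M}$ together with the maximization property of Lemma \ref{pro}(i) for $u$ gives
$$J(s_w\star w)=\max_s J(s\star w)\le \max_s J(s\star u)=J(u).$$
Hence $c_{\rho,r}\le J(s_w\star w)\le J(u)$, and taking the infimum over $u\in\mathcal{D}_\rho\cap\mathcal{M}$ yields $c_{\rho,r}\le c_\rho$, which closes the argument.

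The only genuinely delicate point is the fiberwise inequality $J(s\star w)\le J(s\star u)$, and inside it the identity $\int F(s\star w)=\int F(s\star u)$: it rests on the fact that the rearrangement of $|s\star u|$ is precisely $s\star w$, i.e.\ that symmetrization and the dilation $e^{s}u(e^s\cdot)$ commute, together with the evenness of $F$. Everything else is a routine combination of Pólya--Szegő, equimeasurability, and the fiber-maximization established in Lemma \ref{pro}.
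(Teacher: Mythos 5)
Your proof is correct, and it takes a genuinely different route from the paper's. The paper also symmetrizes, but then works at a single point of the fiber: it observes that $\mathcal{P}(u_n^*)\le\mathcal{P}(u_n)=0$, rescales $u_n^*$ back onto $\mathcal{M}$ with a parameter $s_n^*\le0$, and controls $J(s_n^*\star u_n^*)$ by the monotonicity in $s$ of $s\mapsto J(s\star u)-\frac{1}{2}\mathcal{P}(s\star u)=\frac{1}{2}\int\frac{f(e^su)e^su-4F(e^su)}{e^{2s}u^2}u^2\,dx$, which is where Lemma \ref{mono}(ii), i.e.\ hypothesis $(f_3)$, enters; the chain ends with $J(s_n^*\star u_n^*)\le\frac{1}{2}\int[f(u_n)u_n-4F(u_n)]\,dx=J(u_n)$. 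You instead prove the fiberwise domination $J(s\star u^*)\le J(s\star u)$ for every $s$ (using P\'olya--Szeg\H{o} plus the fact that rearrangement commutes with the $L^2$-preserving dilation) and then compare the maxima of the two fibers via Lemma \ref{pro}(i). Your argument is arguably cleaner: it needs neither the sign analysis of the rescaling parameter nor the monotone quantity $J-\frac{1}{2}\mathcal{P}$, only the fiber-maximization already established; the commutation identity you single out as the delicate point is indeed correct, since $e^su^*(e^s\cdot)$ is radial nonincreasing and equimeasurable with $|e^su(e^s\cdot)|$. The paper's version buys a formula for the energy in terms of $\frac{1}{2}\int[f(u)u-4F(u)]\,dx$ that it reuses later (e.g.\ in Lemma \ref{minimizer}), but for the statement of Lemma \ref{radial} alone both routes are equally valid. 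One shared caveat: both you and the paper use $\int F(u^*)\,dx=\int F(u)\,dx$, which requires $F$ to be even (you state this explicitly, the paper leaves it implicit); evenness of $f$ is not among $(f_1)$--$(f_4)$, so this is an implicit assumption of the paper rather than a defect of your argument.
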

\begin{proof}
For any $u\in \mathcal{D}_{\rho}$ and $s\in \mathbb{R}$, by Lemma \ref{mono} (ii), we have
\begin{eqnarray*}
J\left(s\star u\right)-\frac{1}{2}\mathcal{P}\left(s\star u\right)&=&\frac{1}{2}\int_{\mathbb{R}^2}\left[f\left(e^su(e^sx)\right)e^su(e^sx)-4F\left(e^su(e^sx)\right)\right]dx\\
&=&\frac{1}{2}\int_{\mathbb{R}^2}\frac{f(e^su)e^su-4F(e^su)}{e^{2s}u^2}u^2dx.
\end{eqnarray*}
Clearly, $J\left(s\star u\right)-\frac{1}{2}\mathcal{P}\left(s\star u\right)$ is nondecreasing with respect to $s\in \mathbb{R}$.

Take a minimizing sequence $\{u_n\}$ of $J$ on $\mathcal{D}_{\rho}\cap\mathcal{M}$ and denote the Schwarz symmetrization of $\{u_n\}$ by $\{u_n^*\}$. Then
\begin{eqnarray*}
&&\int_{\mathbb{R}^2}\vert \nabla  u_n^*\vert^2dx\le \int_{\mathbb{R}^2}\vert \nabla  u_n\vert^2dx, \\
&&\int_{\mathbb{R}^2}\vert u_n^*\vert^2dx=\int_{\mathbb{R}^2}\vert u_n\vert^2dx,\\
&& \int_{\mathbb{R}^2}F(u_n^*)dx=\int_{\mathbb{R}^2}F(u_n)dx, \int_{\mathbb{R}^2}f(u_n^*)u_n^*dx=\int_{\mathbb{R}^2}f(u_n)u_ndx.
\end{eqnarray*}
It follows that $u_n^*\in \mathcal{D}_{\rho}\cap H_r^1(\mathbb{R}^2)$ and $\mathcal{P}(u_n^*)\le \mathcal{P}(u_n)=0$.
By $(f_3)$, there exists $s_n^*:= s_n^*(u_n^*)\le 0$ such that $\mathcal{P}(s_n^*\star u_n^*)=0$. Hence, by Lemma \ref{pro}, we get
\begin{eqnarray*}
c_{\rho}\le c_{\rho,r}\le J(s_n^*\star u_n^*)&=& J(s_n^*\star u_n^*)-\frac{1}{2}\mathcal{P}(s_n\star u_n^*)\\
&\le& J(u_n^*)-\frac{1}{2}\mathcal{P}(u_n^*)\\
&=&\frac{1}{2}\int_{\mathbb{R}^2}\left[f(u_n^*)u_n^*-4F(u_n^*)\right]dx\\
&=&\frac{1}{2}\int_{\mathbb{R}^2}\left[f(u_n)u_n-4F(u_n)\right]dx=J(u_n)=c_{\rho}+o_n(1),
\end{eqnarray*}
which implies that $c_{\rho}=c_{\rho,r}$.
\end{proof}

\begin{lemma}\label{minimizer}
Suppose that $(f_1)-(f_3)$ hold.
Assume that $\{u_{n}\}\subset H^{1}_{r}(\mathbb{R}^{2})$ is a bounded minimizing sequence of $J$ on $\mathcal{D}_{\rho}\cap\mathcal{M}$ and there exists $u_0\in H^{1}_{r}(\mathbb{R}^{2})$ such that $u_n(x)\to u_0(x)$ for a.e. $x\in \mathbb{R}^2$ and
\begin{eqnarray}\label{8-10-1}
&&\int_{\mathbb{R}^{2}}F(u_n)dx\rightarrow\int_{\mathbb{R}^{2}}F(u_0)dx,\\
&&\int_{\mathbb{R}^{2}}f(u_n)u_ndx\rightarrow\int_{\mathbb{R}^{2}}f(u_0)u_0dx.\label{8-10-2}
\end{eqnarray}
Then
$c_\rho$ is achieved by $u_0$ and $c_{\rho}>0$.
\end{lemma}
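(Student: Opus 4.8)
The plan is to pass to the weak limit in $H^1_r(\mathbb{R}^2)$, use the two strong convergences (\ref{8-10-1})--(\ref{8-10-2}) to transport both the Nehari--Pohozaev identity and the energy to $u_0$, and then run the fibering map $s\mapsto s\star u_0$ together with the monotonicity from Lemma \ref{mono}(ii) to show that $u_0$ already lies on $\mathcal{M}$ and realizes $c_\rho$.

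First I would extract a subsequence with $u_n\rightharpoonup u_0$ in $H^1_r(\mathbb{R}^2)$ (the sequence being bounded) and $u_n\to u_0$ a.e.; weak lower semicontinuity gives $\|u_0\|_2^2\le\liminf_n\|u_n\|_2^2\le\rho$ and $\|\nabla u_0\|_2^2\le\liminf_n\|\nabla u_n\|_2^2$. Since $u_n\in\mathcal{M}$, we have $\|\nabla u_n\|_2^2=\int_{\mathbb{R}^2}H(u_n)\,dx=\int_{\mathbb{R}^2}f(u_n)u_n\,dx-2\int_{\mathbb{R}^2}F(u_n)\,dx$, so (\ref{8-10-1})--(\ref{8-10-2}) force $\|\nabla u_n\|_2^2\to\int_{\mathbb{R}^2}H(u_0)\,dx$. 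By Lemma \ref{pro}(iii), $\|\nabla u_n\|_2\ge\delta_0>0$, hence $\int_{\mathbb{R}^2}H(u_0)\,dx\ge\delta_0^2>0$; as $H(0)=0$, this forces $u_0\ne0$, so $u_0\in\mathcal{D}_\rho\setminus\{0\}$. Combining the limit with lower semicontinuity gives $\mathcal{P}(u_0)=\|\nabla u_0\|_2^2-\int_{\mathbb{R}^2}H(u_0)\,dx\le0$, while $\int F(u_n)\to\int F(u_0)$ yields $J(u_0)\le\liminf_n J(u_n)=c_\rho$.

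The decisive step, and the one I expect to be the main obstacle, is to rule out a loss of gradient energy in the limit, i.e. to promote $\mathcal{P}(u_0)\le0$ to $\mathcal{P}(u_0)=0$. I would work with the reduced functional $\Psi(s):=J(s\star u_0)-\tfrac12\mathcal{P}(s\star u_0)=\tfrac12\int_{\mathbb{R}^2}g(e^su_0)u_0^2\,dx$, which is nondecreasing by Lemma \ref{mono}(ii); the convergences give $\Psi(0)=\lim_n[J(u_n)-\tfrac12\mathcal{P}(u_n)]=\lim_n J(u_n)=c_\rho$. Because $\mathcal{P}(u_0)\le0$, Lemma \ref{pro}(i) furnishes $s_{u_0}\le0$ with $w:=s_{u_0}\star u_0\in\mathcal{D}_\rho\cap\mathcal{M}$ (the $L^2$ norm being invariant under $\star$), so that
$$c_\rho\le J(w)=\Psi(s_{u_0})\le\Psi(0)=c_\rho.$$
Hence $J(w)=c_\rho$ and $\Psi$ is constant on $[s_{u_0},0]$. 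To force $s_{u_0}=0$ I would exploit the strictness hidden in $(f_2)$: constancy of $\Psi$ and the pointwise bound $g(u_0)\ge g(e^{s_{u_0}}u_0)$ give $g(u_0(x))=g(e^{s_{u_0}}u_0(x))$ a.e.; since radial $H^1$ functions are continuous for $r>0$ and vanish at infinity, the range of $u_0$ is an interval accumulating at $0$, and iterating the dilation together with $g(0^+)=0$ (from $(f_2)$) yields $g\equiv0$ on the range of $u_0$. Solving $f(t)t=4F(t)$ there makes $F(t)/t^4$ constant, and $(f_2)$ forces this constant to vanish, so $F\equiv0$ and $H\equiv0$ on the range of $u_0$; thus $\int_{\mathbb{R}^2}H(u_0)\,dx=0$, contradicting $\int_{\mathbb{R}^2}H(u_0)\,dx\ge\delta_0^2>0$. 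Therefore $s_{u_0}=0$, i.e. $\mathcal{P}(u_0)=0$ and $u_0\in\mathcal{D}_\rho\cap\mathcal{M}$; this gives $J(u_0)\ge c_\rho$, which with $J(u_0)\le c_\rho$ shows that $c_\rho$ is achieved by $u_0$.

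Finally, for $c_\rho>0$: since $u_0\in\mathcal{M}\setminus\{0\}$, Lemma \ref{pro}(i) gives $J(u_0)=\max_{s\in\mathbb{R}}J(s\star u_0)$. Estimating $F$ near the origin through $(f_1)$--$(f_2)$ as in the proof of Lemma \ref{asymp}(i), one has for $s\ll0$
$$e^{-2s}J(s\star u_0)\ge\tfrac12\|\nabla u_0\|_2^2-\epsilon\|u_0\|_4^4-C_\epsilon e^{(q-2)s}\Big(\int_{\mathbb{R}^2}|u_0|^{2q+2}\,dx\Big)^{1/2},$$
whose right-hand side tends to $\tfrac12\|\nabla u_0\|_2^2-\epsilon\|u_0\|_4^4>0$ as $s\to-\infty$ once $\epsilon$ is small. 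Hence $J(s\star u_0)>0$ for some $s$, and $c_\rho=J(u_0)=\max_s J(s\star u_0)>0$.
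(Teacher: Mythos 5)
Your proof is correct, but the mechanism you use to upgrade $\mathcal{P}(u_0)\le 0$ to $\mathcal{P}(u_0)=0$ is genuinely different from the paper's. The paper first isolates the rigidity statement $\Gamma(u):=\int_{\mathbb{R}^2}[f(u)u-4F(u)]dx>0$ for every $u\in H^1_r(\mathbb{R}^2)\setminus\{0\}$ (via the Strauss lemma and the observation that $f(t)t=4F(t)$ on an interval at $0$ forces $F(t)=Ct^4$, contradicting $(f_1)$--$(f_2)$), and then, assuming $\|\nabla u_0\|_2^2<\int_{\mathbb{R}^2}H(u_0)dx$, rescales by the mass-decreasing dilation $u_0(\cdot/t_0)$ with $t_0\in(0,1)$ to land on $\mathcal{M}$ and closes with the strict chain $c_\rho\le\frac{t_0^2}{2}\Gamma(u_0)<\frac12\Gamma(u_0)=c_\rho$. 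You instead stay on the mass-preserving fiber $s\star u_0$, use the monotonicity of $\Psi(s)=J(s\star u_0)-\frac12\mathcal{P}(s\star u_0)$ (which the paper establishes inside Lemma \ref{radial}) to force $\Psi$ to be constant on $[s_{u_0},0]$, and extract the contradiction from the constancy of $g$ on the overlapping intervals $[e^{s_{u_0}}t,t]$ --- ultimately the same $F=Ct^4$ rigidity, applied pointwise rather than packaged as $\Gamma>0$. Your positivity argument for $c_\rho$ also differs: the paper reads it off as $c_\rho=\frac12\Gamma(u_0)>0$, whereas you combine $J(u_0)=\max_{s}J(s\star u_0)$ with the asymptotics of $J(s\star u_0)$ as $s\to-\infty$. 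Both routes are sound; the paper's is more economical in that $\Gamma(u)>0$ is proved once and reused (also later in the proof of Theorem \ref{Thm1.1}), while yours never leaves the sphere of radius $\|u_0\|_2$ and makes the role of $(f_3)$ more transparent. Two details you should write out: the fact that $s_{u_0}\le0$ when $\mathcal{P}(u_0)\le0$ is not literally in the statement of Lemma \ref{pro}(i) but follows from Lemma \ref{mono}(i) (monotonicity of $e^{-4s}\int_{\mathbb{R}^2}H(e^{s}u_0)dx$), exactly as the paper uses it in Lemma \ref{radial}; and the iteration $g(t)=g(e^{s_{u_0}}t)=g(e^{2s_{u_0}}t)=\cdots\to g(0^{+})=0$ needs the remark that each positive (resp.\ negative) component of the range of $u_0$ is an interval with endpoint $0$, so that $e^{s_{u_0}}t$ remains in the range and the constancy propagates down to $0$.
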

\begin{proof}
Firstly, we claim that, under $(f_1)-(f_3)$, the following result holds
  \begin{eqnarray}\label{8-11-3}
\Gamma(u):=\int_{\mathbb{R}^2}\left[f(u)u-4F(u)\right]dx>0,  \forall u\in H_r^1(\mathbb{R}^2)\setminus\{0\}.
  \end{eqnarray}
In fact, by the Strauss radial lemma in \cite{BL1983-1}, we may assume that $u$ is continuous. Since $u\in H^1_r(\mathbb{R}^2),$ we get $\vert u(x)\vert\to0$ as $\vert x\vert\to +\infty$. By Lemma \ref{mono} (iii), we have $\Gamma(u)\ge0$. If $\Gamma(u)=0$, then $f(u(x))u(x)-4F(u(x))=0$ for all $x\in \mathbb{R}^2$.
 Then, there exists an open interval $I$ such that $0\in \bar{I}$ and $f(u)u-4F(u)=0$ for $u(x)\in \bar{I}$. By direct computations, we get $F(u)=C\vert u\vert^{4}$ for some $C>0$ and $u\in \bar{I}$. But this is contrary to $(f_1)-(f_2)$. Hence (\ref{8-11-3}) holds.

Since $\{u_n\}\subset \mathcal{D}_{\rho}\cap\mathcal{M}$,
by (\ref{8-10-1})-(\ref{8-10-2}) and the Fatou lemma, we get
   	\begin{eqnarray*}
   		\int_{\mathbb{R}^{2}}u_{0}^{2}dx\leq\liminf_{n\rightarrow +\infty}\int_{\mathbb{R}^{2}}u_{n}^{2}dx\leq \rho
   	\end{eqnarray*}
   	and
  	\begin{eqnarray*}
   	\int_{\mathbb{R}^{2}}\vert \nabla u_{0}\vert^2dx
   	\leq\liminf_{n\rightarrow +\infty}\int_{\mathbb{R}^{2}}\vert \nabla u_{n}\vert^2dx
   	=\liminf_{n\rightarrow +\infty}\int_{\mathbb{R}^{2}}H(u_{n})dx
   	=\int_{\mathbb{R}^{2}}H(u_{0})dx.
   	\end{eqnarray*}
If $\int_{\mathbb{R}^{2}}\vert \nabla u_{0}\vert^2dx<\int_{\mathbb{R}^{2}}H(u_{0})dx$, defining $t_{0}:= \left(\frac{\int_{\mathbb{R}^{2}}\vert \nabla u_{0}\vert^2dx}{\int_{\mathbb{R}^{2}}H(u_{0})dx}\right)^{\frac{1}{2}}$, we have $t_{0}\in(0,1)$.
  Note that
   	\begin{eqnarray*}
   		\mathcal{P}\left(u_{0}(\frac{x}{t_{0}})\right)=\int_{\mathbb{R}^{2}}\vert \nabla u_{0}\vert^2dx-t_{0}^{2}\int_{\mathbb{R}^{2}}H(u_{0})dx=0
   	\end{eqnarray*}
and
   	\begin{eqnarray*}
   		\int_{\mathbb{R}^{2}}\left(u_{0}(\frac{x}{t_{0}})\right)^{2}dx=t_{0}^{2}\int_{\mathbb{R}^{2}}u_{0}^{2}dx<\rho,
   	\end{eqnarray*}
it follows that $u_{0}(\frac{\cdot}{t_{0}})\in\mathcal{D}_{\rho}\cap\mathcal{M}$. By (\ref{8-10-1})-(\ref{8-11-3}) and the Fatou lemma, we obtain
\[
   	\begin{aligned}
c_{\rho}\leq J\left(u_{0}(\frac{x}{t_{0}})\right)
   	&=\frac{1}{2}\int_{\mathbb{R}^{2}}\vert \nabla\left(u_{0}(\frac{x}{t_{0}})\right)\vert^2dx-\int_{\mathbb{R}^{2}}F\left(u_{0}(\frac{x}{t_{0}})\right)dx\\
   	&=\frac{1}{2}\int_{\mathbb{R}^{2}}\vert \nabla u_{0}\vert^2dx-t_{0}^{2}\int_{\mathbb{R}^{2}}F(u_{0})dx\\
   	&=\frac{1}{2}t_{0}^{2}\int_{\mathbb{R}^{2}}H(u_{0})dx-t_{0}^{2}\int_{\mathbb{R}^{2}}F(u_{0})dx\\
   	&=\frac{1}{2}t_{0}^{2}\int_{\mathbb{R}^{2}}\left[H(u_{0})-2F(u_{0})\right]dx\\
   	&<\frac{1}{2}\int_{\mathbb{R}^{2}}\left[H(u_{0})-2F(u_{0})\right]dx\\
   	&=\liminf_{n\rightarrow +\infty}\frac{1}{2}\int_{\mathbb{R}^{2}}\left[H(u_{n})-2F(u_{n})\right]dx\\
   	&=\liminf_{n\rightarrow +\infty}J(u_{n})=c_{\rho},
   	\end{aligned}
\]
which gives a contradiction. Hence $\int_{\mathbb{R}^{2}}\vert \nabla u_{0}\vert^2dx=\int_{\mathbb{R}^{2}}H(u_{0})dx$. Thus $u_0\in\mathcal{D}_{\rho}\cap\mathcal{M}$ and $\int_{\mathbb{R}^{2}}\vert \nabla u_{n}\vert^2dx\to \int_{\mathbb{R}^{2}}\vert \nabla u_{0}\vert^2dx$. Using (\ref{8-10-1}) and Lemma \ref{radial}, we get $J(u_{0})=c_{\rho}$.

Now we prove $c_{\rho}>0$. In fact, by $(f_1)-(f_3)$ and $u_0\in \mathcal{D}_{\rho}\cap \mathcal{M}\cap H_r^1(\mathbb{R}^2)$, using (\ref{8-11-3}) we get
  \begin{eqnarray*}
 c_{\rho}=J(u_0)=\frac{1}{2}\int_{\mathbb{R}^2}\left[f(u_0)u_0-4F(u_0)\right]dx>0.
  \end{eqnarray*}
  This completes the proof.
\end{proof}

\section{Proof of Theorem  \ref{Thm1.1}}\label{sec3}
In this section, we give the proof of Theorem \ref{Thm1.1}. We first introduce some new estimates on the ground state energy and the upper bound of the minimizing sequences of $J$ on $\mathcal{D}_{\rho}\cap \mathcal{M}$.
\begin{lemma}\label{energy-2}
$c_{\rho}\to0$ as $\beta\to+\infty$.
\end{lemma}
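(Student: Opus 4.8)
The plan is to bound $c_\rho$ from above by a single, $\beta$-independent test function transported onto $\mathcal{M}$ by the scaling flow, and then to show that this upper bound collapses to $0$ as $\beta\to+\infty$. Note first that $c_\rho\ge0$: on $\mathcal{D}_\rho\cap\mathcal{M}$ one has $J(u)=\frac12\int_{\mathbb{R}^2}[f(u)u-4F(u)]\,dx\ge0$ by Lemma \ref{mono}(iii). Hence it suffices to exhibit an upper bound for $c_\rho$ that tends to zero, and the conclusion follows by squeezing.

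First I would fix once and for all a radial function $u\in\mathcal{D}_\rho$, chosen independently of $\beta$, so that $a:=\|\nabla u\|_2^2$ and $b:=\int_{\mathbb{R}^2}|u|^p\,dx$ are fixed positive constants. The decisive feature of the flow $s\star u=e^s u(e^s\cdot)$ is that it preserves the $L^2$-norm, so $s\star u\in\mathcal{D}_\rho$ for every $s\in\mathbb{R}$; by Lemma \ref{pro}(i) the maximizer $s_u$ satisfies $s_u\star u\in\mathcal{M}$, whence $s_u\star u\in\mathcal{D}_\rho\cap\mathcal{M}$ and therefore
\begin{equation*}
c_\rho\le J(s_u\star u)=\max_{s\in\mathbb{R}}J(s\star u).
\end{equation*}

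The next step makes the $\beta$-dependence explicit. From $(f_4)$ one gets the pointwise bound $F(t)\ge\frac{\beta}{p}|t|^p$ for all $t\in\mathbb{R}$, so a change of variables yields
\begin{equation*}
J(s\star u)=\frac{1}{2}e^{2s}a-\int_{\mathbb{R}^2}F\big(e^s u(e^s x)\big)\,dx\le \frac{1}{2}a\,e^{2s}-\frac{\beta b}{p}\,e^{(p-2)s}=:g(s).
\end{equation*}
Since $p>4$, writing $\tau=e^s$ a direct computation locates the unique maximizer $\tau_\ast$ of $g$ through $\tau_\ast^{\,p-4}=\frac{ap}{\beta b(p-2)}$, and gives
\begin{equation*}
\max_{s\in\mathbb{R}}g(s)=a\,\tau_\ast^{2}\,\frac{p-4}{2(p-2)}=\frac{a(p-4)}{2(p-2)}\Big(\frac{ap}{b(p-2)}\Big)^{\frac{2}{p-4}}\beta^{-\frac{2}{p-4}}.
\end{equation*}
Combining the three displays, $0\le c_\rho\le\max_s g(s)\to0$ as $\beta\to+\infty$, which is the claim.

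The computations here are elementary, and there is no compactness or regularity issue to resolve; the genuinely substantive ingredient is Lemma \ref{pro}(i), which guarantees that the explicit comparison function $g$ really dominates the constrained value $c_\rho$. The only points that require care are that the test function be fixed independently of $\beta$ (so that $a$ and $b$ remain constant while $\beta\to\infty$, forcing $\tau_\ast\to0$), and that the flow employed be the mass-preserving one, so that membership of $s\star u$ in $\mathcal{D}_\rho$ is automatic for all $s$.
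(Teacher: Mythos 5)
Your proposal is correct and follows essentially the same route as the paper: fix a $\beta$-independent test function in $\mathcal{D}_{\rho}$, use $(f_4)$ to bound $J(s\star u)$ above by $\frac12 a e^{2s}-\frac{\beta b}{p}e^{(p-2)s}$, maximize this explicitly to get an upper bound of order $\beta^{-2/(p-4)}$, and invoke Lemma \ref{pro}(i) to transfer the bound to $c_{\rho}$. Your additional observation that $c_{\rho}\ge 0$ (via Lemma \ref{mono}(iii) on $\mathcal{M}$) is a small but welcome completion of the squeeze that the paper leaves implicit.
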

\begin{proof}
Define $\psi_0:= \frac{\rho}{\pi}e^{-\vert x\vert^2}, \forall x\in \mathbb{R}^2$. Clearly, $\int_{\mathbb{R}^2}\vert \psi_0\vert^2dx=\rho$, which implies that $\psi_0\in \mathcal{D}_{\rho}$. By $(f_4)$, for any $s\in \mathbb{R}$, we have
\begin{eqnarray*}
J(s\star\psi_0)\le \frac{1}{2}e^{2s}\int_{\mathbb{R}^2}\vert\nabla \psi_0\vert^2dx-\frac{\beta}{p}e^{(p-2)s}\int_{\mathbb{R}^2}\vert \psi_0\vert^pdx.
\end{eqnarray*}
Define $\varphi(t):=\frac{1}{2}A_1t^2-\frac{\beta}{p}A_2t^{p-2}, \forall t\in \mathbb{R}$, where
$$
A_1:= \int_{\mathbb{R}^2}\vert\nabla \psi_0\vert^2dx,~ A_2:= \int_{\mathbb{R}^2}\vert \psi_0\vert^pdx.
$$
Clearly, $A_1, A_2>0$. Then
$$
\varphi'(t)=A_1t-\frac{\beta (p-2)}{p}A_2t^{p-3}$$
 and $$\varphi''(t)=A_1-\frac{\beta (p-2)(p-3)}{p}A_2t^{p-4}.
 $$
 Denote
 $$
 t_1:=\left(\frac{pA_1}{\beta(p-2)A_2}\right)^{\frac{1}{p-4}}.
 $$
 By direct computations, we get $\varphi(0)=\varphi'(0)=\varphi'(t_1)=0$, $\varphi''(t_1)<0$, $\varphi(t)\to -\infty$ as $t\to+\infty$. Hence $\varphi$ admits the unique maximum at $t_1$ and
 $$
 \varphi(t_1)=\beta^{\frac{-2}{p-4}}A_3>0
 $$ with
 $$
 A_3:= A_1^{\frac{p-2}{p-4}}\left(\frac{p}{(p-2)A_2}\right)^{\frac{2}{p-4}}\frac{p-4}{2(p-2)}>0.
 $$
 By Lemma \ref{pro} and $p>4$, we get $c_{\rho}\le \beta^{\frac{-2}{p-4}}A_3 \to 0$ as $\beta\to +\infty$.
\end{proof}

Set $\beta_0:= \left(\frac{8\alpha_0 A_3}{\pi}\right)^{\frac{p-4}{2}}$. In view of the definition of $A_3$, we know $\beta_0$ depends only on $\rho$ and $p$.
Obviously, $\beta^{\frac{-2}{p-4}}A_3\le \frac{\pi}{8\alpha_0}, \forall \beta\ge \beta_0$.
\begin{lemma}\label{upper-bound}
If $\{u_n\}$ is a minimizing sequence of $J$ on $\mathcal{D}_\rho\cap \mathcal{M}$, then
there is $M>0$ independent of $n$ such that $\|\nabla u_n\|_2^2\le M$ for all $\beta\ge \beta_0$.
\end{lemma}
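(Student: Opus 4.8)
The plan is to argue by contradiction: assume that, along a subsequence, $a_n := \|\nabla u_n\|_2^2 \to +\infty$, and derive a contradiction with the energy bound $c_\rho \le \frac{\pi}{8\alpha_0}$, which holds for $\beta \ge \beta_0$ thanks to Lemma \ref{energy-2} and the choice of $\beta_0$. Throughout I would use that $u_n \in \mathcal{M}$, so by Lemma \ref{pro}(i) the map $s \mapsto J(s\star u_n)$ attains its maximum at $s=0$; hence $J(u_n) \ge J(s\star u_n)$ for every $s \in \mathbb{R}$, while $J(u_n) \to c_\rho$. The basic identity is $J(v) = \frac12\|\nabla v\|_2^2 - \int_{\mathbb{R}^2}F(v)\,dx$.

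First I would record a pointwise consequence of $(f_3)$ and $(f_4)$. Lemma \ref{mono}(iii) gives $F(t) \le \frac14 f(t)t$, so $H(t) = f(t)t - 2F(t) \ge \frac12 f(t)t \ge \frac{\beta}{2}|t|^p$ by $(f_4)$. Since $u_n \in \mathcal{M}$ means $\|\nabla u_n\|_2^2 = \int H(u_n)\,dx$, this yields the crucial decay estimate $\int_{\mathbb{R}^2}|u_n|^p\,dx \le \frac{2}{\beta}a_n$. Next, fix a scale $\tau$ with $2c_\rho < \tau < \frac{2\pi}{\alpha}$ for some $\alpha > \alpha_0$; such a $\tau$ exists precisely because $c_\rho \le \frac{\pi}{8\alpha_0} < \frac{\pi}{\alpha}$ once $\alpha$ is chosen close to $\alpha_0$. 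Using the $\star$-scaling I set $w_n := s_n\star u_n$ with $\|\nabla w_n\|_2^2 = \tau$; since $a_n \to +\infty$ this forces $s_n \to -\infty$, while $\|w_n\|_2 = \|u_n\|_2 \le \sqrt{\rho}$ is preserved. The maximality of $J(u_n)$ along the scaling orbit then gives the lower bound $\int_{\mathbb{R}^2}F(w_n)\,dx \ge \frac12\tau - J(u_n) = \frac12\tau - c_\rho + o(1)$, which stays bounded away from $0$.

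The heart of the proof is the competing upper bound $\int_{\mathbb{R}^2}F(w_n)\,dx \to 0$. I would split $\mathbb{R}^2$ according to $\{|w_n| \le r_\delta\}$ and $\{|w_n| > r_\delta\}$. On the first set, $(f_2)$ gives $F(t) = o(t^2)$ as $t\to 0$, so for any $\delta>0$ one may choose $r_\delta$ with $F(t) \le \delta t^2$ there, whence this contribution is at most $\delta\|w_n\|_2^2 \le \delta\rho$. On the second set I would use a growth bound of the form $F(t) \le C_\delta|t|^{p/2}(e^{\alpha t^2}-1)$ valid for $|t|>r_\delta$, followed by the Cauchy--Schwarz inequality together with $(e^{\alpha w_n^2}-1)^2 \le e^{2\alpha w_n^2}-1$; the Trudinger--Moser factor $(\int(e^{2\alpha w_n^2}-1)\,dx)^{1/2}$ is uniformly bounded because $\|\nabla w_n\|_2^2 = \tau$ and $2\alpha\tau < 4\pi$ (applying Lemma \ref{TM} to $w_n/\sqrt{\tau}$), while the polynomial factor is controlled by $(\int|w_n|^p\,dx)^{1/2}$. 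Since the scaling gives $\int|w_n|^p\,dx = (\tau/a_n)^{(p-2)/2}\int|u_n|^p\,dx \le \frac{2\tau^{(p-2)/2}}{\beta}\,a_n^{(4-p)/2}$ and $p>4$, this tends to $0$. Letting $n \to \infty$ and then $\delta \to 0$ yields $\frac12\tau - c_\rho \le 0$, contradicting $\tau > 2c_\rho$. Hence $\{a_n\}$ is bounded, and the bound $M := \sup_n a_n$ is independent of $n$; the admissibility of the window $2c_\rho < \tau < 2\pi/\alpha$ for every $\beta \ge \beta_0$ is exactly where $c_\rho \le \frac{\pi}{8\alpha_0}$ enters.

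Finally, I would flag the main obstacle. Because $s\star u$ preserves the $L^2$-norm, rescaling $u_n$ to a fixed gradient level does not by itself make $\int F(w_n)\,dx$ small: the mass stays of order $\sqrt{\rho}$, so the naive algebra relating $\|\nabla u_n\|_2^2$, $\int F(u_n)\,dx$ and $\int f(u_n)u_n\,dx$ is circular. The mechanism that breaks this circularity is the interplay between the superquartic lower bound $F(t) \ge \frac{\beta}{p}|t|^p$ from $(f_4)$, which forces the $L^p$-mass of the rescaled functions to vanish when the gradient blows up, and the subcritical Trudinger--Moser control available strictly below the threshold $2\pi/\alpha$. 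Matching these two requires $c_\rho$ to sit below this threshold, which is why Lemma \ref{energy-2} and the requirement $\beta \ge \beta_0$ are indispensable.
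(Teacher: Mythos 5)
Your proof is correct, and it takes a genuinely different route from the paper's. The paper normalizes $v_n:=(-s_n)\star u_n$ to gradient level $1$ and then runs a concentration--compactness dichotomy: in the non-vanishing case it uses $(f_4)$ and Fatou to drive $J(u_n)\to-\infty$, and in the vanishing case it invokes the Lions lemma to get $v_n\to0$ in $L^r(\mathbb{R}^2)$ for $r>2$, kills $\int_{\mathbb{R}^2}F(s_0\star v_n)\,dx$ via the Trudinger--Moser inequality at the scale $s_0=\frac12\ln(\pi/2\alpha_0)$, and contradicts $c_\rho\le\frac{\pi}{8\alpha_0}$ through the bound $J(u_n)\ge J((s_0-s_n)\star u_n)=\frac{\pi}{4\alpha_0}+o_n(1)$. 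You bypass the dichotomy entirely: the observation that $(f_3)$ (via Lemma \ref{mono}(iii)) and $(f_4)$ give $H(t)\ge\frac12 f(t)t\ge\frac{\beta}{2}|t|^p$, hence $\int_{\mathbb{R}^2}|u_n|^p\,dx\le\frac{2}{\beta}\|\nabla u_n\|_2^2$ on $\mathcal{M}$, makes the rescaled $L^p$-mass vanish \emph{unconditionally} (since $p>4$), so no Lions lemma and no non-vanishing case are needed. Both arguments share the same skeleton --- fiber maximality $J(u_n)\ge J(s\star u_n)$ from Lemma \ref{pro}(i), Trudinger--Moser strictly below the $4\pi$ threshold, and the smallness $c_\rho\le\frac{\pi}{8\alpha_0}$ from Lemma \ref{energy-2} --- but yours is more quantitative and self-contained, at the price of leaning harder on the pointwise lower bound for $H$; the paper's version is the more standard template and would survive if $(f_4)$ were weakened to hold only near infinity. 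Your choice of window $2c_\rho<\tau<2\pi/\alpha$ is admissible (e.g.\ $\alpha=2\alpha_0$, $\tau=\pi/2\alpha_0$ works since $c_\rho\ge0$ on $\mathcal{M}$ by Lemma \ref{mono}(iii)), and all the individual estimates (the splitting at $r_\delta$, the Cauchy--Schwarz step with $(e^{\alpha t^2}-1)^2\le e^{2\alpha t^2}-1$, and the scaling identity $\int|w_n|^p=(\tau/a_n)^{(p-2)/2}\int|u_n|^p$) check out.
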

\begin{proof}
We assume by contradiction that $\|\nabla u_n\|_2\to+\infty$ for some $\beta \ge \beta_0$. For any $n\ge1$, define
$$
s_n:= \ln\left(\|\nabla u_n\|_{2}\right),~~v_n:=(-s_n)\star u_n.
$$
Then $s_n\to+\infty$
and
$$
\int_{\mathbb{R}^2}\vert\nabla v_n\vert^2dx=e^{-2s_n}\int_{\mathbb{R}^2}\vert\nabla u_n\vert^2dx=1.
$$
Set $\hat{\delta}:= \limsup\limits_{n\rightarrow +\infty}\big(\sup\limits_{y\in \mathbb{R}^2}\int_{B(y,1)}\vert v_n\vert^2dx\big)$. We will distinguish the two cases:

Case 1. $\hat{\delta}>0$, i.e., non-vanishing occurs. Then, up to a subsequence, there exist $\{y_n\}\subset \mathbb{R}^2$ and $z_0\in H^1(\mathbb{R}^2)$ with $z_0\not\equiv0$ such that
\begin{eqnarray*}
z_n:= v_n\left(\cdot+y_n\right)\rightharpoonup z_0~~\mbox{in}~~H^1(\mathbb{R}^2),~~~z_n\to z_0,~~a.e.~~ x\in H^1(\mathbb{R}^2).
\end{eqnarray*}
By $(f_4)$ and the Fatou lemma, it follows that
\begin{eqnarray*}
c_{\rho}+o_n(1)=J(u_n)&=&J(s_n\star v_n)\\
&=& \frac{1}{2}e^{2s_n}\int_{\mathbb{R}^{2}}\vert\nabla v_n\vert^2dx-e^{-2s_n}\int_{\mathbb{R}^2}F(e^{s_n}v_n)dx\\
&=&e^{2s_n}\left[\frac{1}{2}-e^{-4s_n}\int_{\mathbb{R}^2}F(e^{s_n}z_n)dx\right]\\
&\le&e^{2s_n}\left[\frac{1}{2}-\frac{\beta_0}{p}e^{(p-4)s_n}\int_{\mathbb{R}^2}\vert z_0\vert^pdx\right]\\
&\to&-\infty,
\end{eqnarray*}
which is a contradiction.

Case 2. $\hat{\delta}=0$, i.e., $\{v_n\}$ is vanishing. By the Lions lemma (see \cite{W1996}), $v_n\to 0$ in $L^r(\mathbb{R}^2)$ for all $r>2$.
By $(f_1)-(f_2)$ and similar arguments as in Lemma \ref{asymp}, for any $\epsilon>0$, there exists $C_{\epsilon}>0$ such that
\begin{eqnarray}\label{7-14-1}
\int_{\mathbb{R}^{2}}\vert F(u) \vert dx\le\epsilon\int_{\mathbb{R}^{2}}\vert u\vert^4dx+C_{\epsilon}\left(\int_{\mathbb{R}^2}\vert u\vert^{2q+2}\right)^{\frac{1}{2}}\left(\int_{\mathbb{R}^{2}}\left(e^{4\alpha_0 \vert u\vert^{2}}-1\right)dx\right)^{\frac{1}{2}}.
    \end{eqnarray}
Define $w_n(x):= \sqrt{\frac{2\alpha_0}{\pi}}e^{s}v_n(e^{s}x)$. Then
$$
\int_{\mathbb{R}^{2}}\vert\nabla w_n\vert^{2}dx=e^{2s}\frac{2\alpha_0}{\pi}\int_{\mathbb{R}^{2}}\vert\nabla v_n\vert^{2}dx=e^{2s}\frac{2\alpha_0}{\pi}\le 1, \forall s\le \frac{1}{2}\ln\left(\frac{\pi}{2\alpha_0}\right)
$$
 and
 $$\int_{\mathbb{R}^{2}}\vert w_n\vert^{2}dx=\frac{2\alpha_0}{\pi}\int_{\mathbb{R}^{2}}\vert v_n\vert^{2}dx\le\frac{2\alpha_0 \rho}{\pi}.
 $$
 By Lemma \ref{TM}, for all $s\le \frac{1}{2}\ln\left(\frac{\pi}{2\alpha_0}\right)$,
\begin{eqnarray*}
\int_{\mathbb{R}^2}\left(e^{4\alpha_0 \vert e^{s}v_n(e^{s}x)\vert^2}-1\right)dx=\int_{\mathbb{R}^2}\left(e^{2\pi \vert w_n\vert^2}-1\right)dx\le C_1
\end{eqnarray*}
for some $C_1>0$. Hence, for all $s\le \frac{1}{2}\ln\left(\frac{\pi}{2\alpha_0}\right)$,
 \begin{eqnarray*}
    		\int_{\mathbb{R}^{2}}\vert F(s\star v_n)\vert dx \le \epsilon e^{2s}\int_{\mathbb{R}^{2}}\vert v_n(x)\vert^{4}dx+C_{\epsilon}C_1^{\frac{1}{2}} e^{qs}\left(\int_{\mathbb{R}^{2}}\vert v_n(x)\vert^{2q+2}dx\right)^{\frac{1}{2}}\to0.
    \end{eqnarray*}	
Then, by Lemma \ref{pro} (i), taking $s_0=\frac{1}{2}\ln(\frac{\pi}{2\alpha_0})$, we get
 \begin{eqnarray*}
c_{\rho}+o_n(1)=J(u_n)&\ge&J((s_0-s_n)\star u_n)\\
&=& \frac{1}{2}e^{2s_0}-\int_{\mathbb{R}^2}F(s_0*v_n)dx=\frac{\pi}{4\alpha_0}+o_n(1).
\end{eqnarray*}
Then we get a contradiction to $\beta\ge \beta_0$.
\end{proof}

Furthermore, we have the follow crucial estimate.
\begin{lemma}\label{bound}
If $\{u_n\}$ is a minimizing sequence of $J$ on $\mathcal{D}_\rho\cap \mathcal{M}$, then
there are $n_1\in \mathbb{Z}^+$ and $\beta_1:=\beta(\rho, p)>0$ such that $\|\nabla u_n\|_2\le \frac{\pi}{2\alpha_0}$ for all $n\ge n_1$ and $\beta\ge \beta_1$.
\end{lemma}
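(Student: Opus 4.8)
The plan is to argue by contradiction, exploiting that the ground state energy $c_\rho$ is small for large $\beta$ (Lemma \ref{energy-2}) together with the variational characterization $J(u_n)=\max_{s\in\mathbb{R}}J(s\star u_n)$, valid since $u_n\in\mathcal{M}$ (Lemma \ref{pro}(i)). Suppose that, for arbitrarily large $\beta$, a term $u_n$ of the minimizing sequence satisfies $\|\nabla u_n\|_2^2>\frac{\pi}{2\alpha_0}$. I would rescale $u_n$ \emph{down} to the critical gradient level by setting
\[
\bar s_n:=\frac12\ln\Big(\frac{\pi}{2\alpha_0\|\nabla u_n\|_2^2}\Big)<0,\qquad \bar u_n:=\bar s_n\star u_n,
\]
so that $\|\nabla \bar u_n\|_2^2=\frac{\pi}{2\alpha_0}$ while $\|\bar u_n\|_2^2=\|u_n\|_2^2\le\rho$ (the $L^2$ norm is scaling invariant). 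Maximality gives $J(u_n)\ge J(\bar u_n)=\frac{\pi}{4\alpha_0}-\int_{\mathbb{R}^2}F(\bar u_n)\,dx$, so it suffices to prove that $\int_{\mathbb{R}^2}F(\bar u_n)\,dx\to0$ as $\beta\to+\infty$, uniformly in $n$: this forces $c_\rho\ge\frac{\pi}{4\alpha_0}-o_\beta(1)$, contradicting $c_\rho\to0$.

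The decisive gain is a uniform smallness of $\|\bar u_n\|_p$. From $u_n\in\mathcal{M}$, Lemma \ref{mono}(iii) (giving $H\ge 2F$) and $(f_4)$ (giving $F(t)\ge\frac{\beta}{p}|t|^p$) I obtain
\[
\|\nabla u_n\|_2^2=\int_{\mathbb{R}^2}H(u_n)\,dx\ge \frac{2\beta}{p}\|u_n\|_p^p,
\]
hence $\|u_n\|_p^p\le\frac{p}{2\beta}\|\nabla u_n\|_2^2$. Since $\|\bar u_n\|_p^p=e^{(p-2)\bar s_n}\|u_n\|_p^p$, inserting the definition of $\bar s_n$ and using $\|\nabla u_n\|_2^2>\frac{\pi}{2\alpha_0}$ together with $p>4$, the powers of $\|\nabla u_n\|_2^2$ cancel exactly and yield the clean bound $\|\bar u_n\|_p^p\le\frac{p\pi}{4\alpha_0\beta}$, which is independent of $n$ and tends to $0$ as $\beta\to+\infty$.

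It remains to convert smallness of $\|\bar u_n\|_p$ into smallness of $\int F(\bar u_n)\,dx$. By $(f_1)$--$(f_2)$, fixing $\alpha\in(\alpha_0,4\alpha_0)$ and $q>p$, one has $F(t)\le |t|^4+C|t|^{q+1}(e^{\alpha t^2}-1)$, and Hölder's inequality gives
\[
\int_{\mathbb{R}^2}F(\bar u_n)\,dx\le \|\bar u_n\|_4^4+C\Big(\int_{\mathbb{R}^2}|\bar u_n|^{2q+2}dx\Big)^{1/2}\Big(\int_{\mathbb{R}^2}(e^{2\alpha\bar u_n^2}-1)dx\Big)^{1/2}.
\]
Because $\|\nabla\bar u_n\|_2^2=\frac{\pi}{2\alpha_0}$ is fixed and $\|\bar u_n\|_2^2\le\rho$, rescaling to unit gradient and applying Lemma \ref{TM} with exponent $2\alpha\cdot\frac{\pi}{2\alpha_0}=\frac{\alpha\pi}{\alpha_0}<4\pi$ bounds the Trudinger–Moser factor uniformly in $n$, while Lemma \ref{GN} bounds $\|\bar u_n\|_r$ for every $r$. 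Interpolating the $L^4$ and $L^{2q+2}$ norms between the bounded $L^2$/high-$L^r$ norms and the small $L^p$ norm (legitimate since $2<4<p<2q+2$) shows $\|\bar u_n\|_4^4\to0$ and $\|\bar u_n\|_{2q+2}\to0$ as $\beta\to+\infty$, uniformly in $n$. Hence $\int F(\bar u_n)\,dx\to0$; choosing $\beta_1$ so large that both this quantity and $c_\rho$ are $<\frac{\pi}{16\alpha_0}$ for $\beta\ge\beta_1$ yields, for $n\ge n_1$, the contradiction $\frac{3\pi}{16\alpha_0}\le J(u_n)=c_\rho+o_n(1)<\frac{\pi}{8\alpha_0}$, proving $\|\nabla u_n\|_2^2\le\frac{\pi}{2\alpha_0}$.

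The main obstacle is the circularity at the exponential level: a uniform Trudinger–Moser bound on $\int(e^{c\alpha_0 u_n^2}-1)\,dx$ presupposes control of $\|\nabla u_n\|_2^2$ strictly below $\frac{2\pi}{\alpha_0}$, which is exactly what one is trying to establish. The rescaled function $\bar u_n$ breaks this loop by pinning the gradient at the admissible level $\frac{\pi}{2\alpha_0}$ \emph{before} invoking Lemma \ref{TM}, while the algebraic cancellation in the estimate of $\|\bar u_n\|_p^p$—made possible by $p>4$—is what delivers a bound uniform in $n$ without relying on the (a priori $\beta$-dependent) constant $M$ from Lemma \ref{upper-bound}.
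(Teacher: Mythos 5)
Your proposal is correct, and it reaches the conclusion by a genuinely different route from the paper's. The paper first establishes a rough a priori bound $\|\nabla u_n\|_2^2\le M$ (Lemma \ref{upper-bound}, via a vanishing/non-vanishing dichotomy), then dilates every $u_n$ by the \emph{same} fixed amount $s_0-\tilde s_k$, with $k$ chosen through an explicit computation so that a three-term splitting of $F$ (quartic part, exponential part, and a $C_\epsilon|t|^p$ part whose coefficient is absorbed by $\beta$) yields $\int|F((s_0-\tilde s_k)\star u_n)|\le\frac14 e^{2(s_0-\tilde s_k)}\|\nabla u_n\|_2^2$; combining with $J(u_n)\ge J((s_0-\tilde s_k)\star u_n)$ and $c_\rho\le\beta^{-2/(p-4)}A_3$ then gives the bound directly and quantitatively. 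You instead argue by contradiction and dilate each $u_n$ \emph{adaptively} so its gradient lands exactly at the admissible level $\pi/(2\alpha_0)$; the inequality $\|\nabla u_n\|_2^2=\int_{\mathbb{R}^2}H(u_n)dx\ge\frac{2\beta}{p}\|u_n\|_p^p$ (the same one the paper uses) combines with the scaling law $\|\bar u_n\|_p^p=e^{(p-2)\bar s_n}\|u_n\|_p^p$ so that the powers of $\|\nabla u_n\|_2$ cancel — I verified that $\frac{p}{2\beta}\left(\frac{\pi}{2\alpha_0}\right)^{(p-2)/2}\|\nabla u_n\|_2^{-(p-4)}<\frac{p\pi}{4\alpha_0\beta}$ under the contradiction hypothesis and $p>4$ — after which interpolation against the bounded $L^2$ and Gagliardo--Nirenberg norms, plus Trudinger--Moser at exponent $\alpha\pi/\alpha_0<4\pi$, forces $\int_{\mathbb{R}^2}F(\bar u_n)dx\to0$ and hence $c_\rho\ge\frac{\pi}{4\alpha_0}-o_\beta(1)$, contradicting Lemma \ref{energy-2}. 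What your approach buys is the complete elimination of Lemma \ref{upper-bound} and of the ad hoc constant $k$, with a cleaner uniform-in-$n$ bound; what it gives up is the explicit estimate $\|\nabla u_n\|_2^2\le 8c_\rho e^{-2(s_0-\tilde s_k)}$ that the paper's direct argument produces. Two harmless remarks: what you prove is $\|\nabla u_n\|_2^2\le\pi/(2\alpha_0)$, which is the version actually invoked later (the lemma's statement omits the square, evidently a typo), and, exactly as in the paper's own proof, your $n_1$ depends on $\beta$ and on the particular minimizing sequence through the rate at which $J(u_n)\to c_\rho$.
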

\begin{proof}
By Lemma \ref{upper-bound}, $\|\nabla u_n\|_2^2\le M$ for some $M>0$.
Define $s_0$ as in Lemma \ref{upper-bound}, i.e., $s_0:=\frac{1}{2}\ln(\frac{\pi}{2\alpha_0}).$
Define $\tilde{s}_k:= \ln\left(M+1+k\right)$, where $k\ge 1$ is to be determined.
By $(f_1)-(f_2)$, for any $\epsilon>0$, there exist $\delta_1\in(0,1)$ and $\delta_2\in (2,+\infty)$ such that
\begin{eqnarray*}
\vert F(t)\vert\le \epsilon \vert t\vert^4, \, \forall \vert t \vert\le \delta_1,\,\, \vert F(t)\vert \le \epsilon \vert t\vert^{p+1}(e^{2\alpha_0t^2}-1),\, \forall \vert t\vert \ge \delta_2.
\end{eqnarray*}
Then, together with $(f_4)$, we get
\begin{eqnarray}\label{10-20-1}
\vert F(t)\vert\le \epsilon \vert t\vert^4+\epsilon \vert t\vert^{p+1}(e^{2\alpha_0t^2}-1)+C_{\epsilon}\vert t\vert^p, \, \forall t\in \mathbb{R},
\end{eqnarray}
where $C_{\epsilon}:=4\beta+M_{\epsilon}$ with $M_{\epsilon}:=\sup\limits_{\vert t\vert \le \delta_2}\vert F(t)\vert +\sup\limits_{\delta_1\le \vert t\vert \le 1}\frac{\vert F(t)\vert }{\vert t\vert^p}$.

  Set
  $$
  \tilde{w}_{n}(x):= \sqrt{\frac{2\alpha_0}{\pi}}e^{s_0-\tilde{s}_k}u_n\left(e^{s_0-\tilde{s}_k}x\right).
  $$
   Then
\begin{eqnarray*}
&&\int_{\mathbb{R}^{2}}\vert\nabla \tilde{w}_{n}\vert^{2}dx=e^{2(s_0-\tilde{s}_k)}\frac{2\alpha_0}{\pi}\int_{\mathbb{R}^{2}}\vert\nabla u_n\vert^{2}dx\le\frac{\int_{\mathbb{R}^{2}}\vert\nabla u_n\vert^{2}dx}{(M+1)^2}\le 1, \\ &&\int_{\mathbb{R}^{2}}\vert \tilde{w}_{n}\vert^{2}dx\le \frac{2\alpha_0 \rho}{\pi}.
\end{eqnarray*}
By Lemma \ref{TM}, there exists $C_1(\alpha_0, \rho)>0$ such that
\begin{eqnarray*}
\int_{\mathbb{R}^2}\left(e^{4\alpha_0 \vert(s_0-\tilde{s}_k)\star u_n\vert^2}-1\right)dx=\int_{\mathbb{R}^2}\left(e^{2\pi \vert \tilde{w}_n\vert^2}-1\right)dx\le C_1(\alpha_0, \rho).
\end{eqnarray*}
By Lemma \ref{GN}, there exist $C_2, C_3(q)>0$, independent of $n$, such that
\begin{eqnarray*}
 \int_{\mathbb{R}^2}\vert u_n\vert^4dx\le C_2 \rho  \int_{\mathbb{R}^2}\vert\nabla u_n\vert^2dx, \, \int_{\mathbb{R}^2}\vert u_n\vert^{2p+2}dx\le C_3(p) \rho \left(\int_{\mathbb{R}^2}\vert\nabla u_n\vert^2dx\right)^{p}.
\end{eqnarray*}
In addition, since $\{u_n\}\subset \mathcal{M}$, by Lemma \ref{mono} (iii) and $(f_4)$, it follows that
\begin{eqnarray*}
\int_{\mathbb{R}^{2}}\vert\nabla u_{n}\vert^{2}dx=\int_{\mathbb{R}^2}\left[f(u_n)u_n-2F(u_n)\right]dx\ge 2\int_{\mathbb{R}^2}F(u_n)dx\ge \frac{2\beta}{p}\int_{\mathbb{R}^2}\vert u_n\vert^pdx.
\end{eqnarray*}
Then, in view of (\ref{7-14-1}), (\ref{10-20-1}) and $p>4$, by Lemma \ref{upper-bound}, for any $\epsilon\in(0,1)$, there exists $C_{\epsilon}>0$ such that
\begin{eqnarray}\label{8-26-1}
    		&&\int_{\mathbb{R}^{2}}\vert F\left((s_0-\tilde{s}_k)\star u_n\right)\vert dx\nonumber\\
    &\le&\epsilon e^{2(s_0-\tilde{s}_k)} C_2\rho\|\nabla u_n\|_2^2+\epsilon e^{p(s_0-\tilde{s}_k)} \left(C_1(\alpha_0, \rho)\right)^{\frac{1}{2}}\left(C_3(p)\right)^{\frac{1}{2}}\rho^{\frac{1}{2}}\|\nabla u_n\|_2^p\nonumber\\
    &&+e^{(p-2)(s_0-\tilde{s}_k)} \frac{pC_{\epsilon}}{2\beta}\|\nabla u_n\|_2^2\nonumber\\
    &\le& \left[\epsilon C_2\rho+\frac{\epsilon\sigma M^{\frac{p-2}{2}}}{\left(M+1+k\right)^{p-2}}+\frac{(\frac{\pi}{2\alpha_0})^{\frac{p-4}{2}}}{\left(M+1+k\right)^{p-4}}\frac{pC_{\epsilon}}{2\beta}\right]e^{2(s_0-\tilde{s}_k)}\|\nabla u_n\|_2^2,~~
    \end{eqnarray}
 where $\sigma:=(\frac{\pi}{2\alpha_0})^{\frac{p-2}{2}} \left(C_1(\alpha_0, \rho)\right)^{\frac{1}{2}}\left(C_3(p)\right)^{\frac{1}{2}}\rho^{\frac{1}{2}}$.

Taking $k:=p+1+16\sigma^{\frac{2}{p-2}}+(32p)^{\frac{1}{p-4}}(\frac{\pi}{2\alpha_0})^{\frac{1}{2}}$ in (\ref{8-26-1}), we get, for any $\epsilon\in(0,1)$,
\begin{eqnarray*}
\frac{\epsilon\sigma M^{\frac{p-2}{2}}}{\left(M+1+k\right)^{p-2}}&\le& \frac{\sigma M^{\frac{p-2}{2}}}{\left(M+1+p+1+16\sigma^{\frac{2}{p-2}}\right)^{p-2}}\\
&\le& \frac{\sigma M^{\frac{p-2}{2}}}{\left(M+1\right)^\frac{p-2}{2}\left(p+1+16\sigma^{\frac{2}{p-2}}\right)^\frac{p-2}{2}}\\
&\le& \frac{\sigma }{\left(16\sigma^{\frac{2}{p-2}}\right)^\frac{p-2}{2}}\\
&\le&\frac{1}{16}.
\end{eqnarray*}
Taking $\epsilon:=\frac{1}{16 C_2\rho+1}$ in (\ref{8-26-1}), we obtain $\epsilon C_2\rho\le \frac{1}{16}$. Furthermore,
\begin{eqnarray*}
\frac{(\frac{\pi}{2\alpha_0})^{\frac{p-4}{2}}}{\left(M+1+k\right)^{p-4}}\frac{pC_{\epsilon}}{2\beta}&=&\frac{(\frac{\pi}{2\alpha_0})^{\frac{p-4}{2}}}{\left(M+1+k\right)^{p-4}}\frac{p(4\beta+M_{\epsilon})}{2\beta}\\
&\le&\frac{4\beta+M_{\epsilon}}{64\beta}\\
&\le&\frac{1}{8},
\end{eqnarray*}
where $\beta\ge \beta_0$ is taken larger if necessary. Thus, we deduce that
\begin{eqnarray*}
  \int_{\mathbb{R}^{2}}\vert F\left((s_0-\tilde{s}_k\right)\star u_n)\vert dx\le\frac{1}{4}e^{2(s_0-\tilde{s}_k)}\|\nabla u_n\|_2^2.
    \end{eqnarray*}	
Hence, by Lemma \ref{pro} (i), it follows that
\begin{eqnarray*}
c_{\rho}+o_n(1)&=&I(u_n)\\
&\ge&I((s_0-\tilde{s}_k)\star u_n)\\
&=&\frac{1}{2}e^{2(s_0-\tilde{s}_k)}\int_{\mathbb{R}^{2}}\vert\nabla u_n\vert^2dx-\int_{\mathbb{R}^2}F\left((s_0-\tilde{s}_k)\star u_n\right)dx\\
&\ge&\frac{1}{4}e^{2(s_0-\tilde{s}_k)}\|\nabla u_n\|_2^2,
\end{eqnarray*}
which together with Lemma \ref{energy-2} implies that, for sufficiently large $n_1$, we get
\begin{eqnarray*}
\|\nabla u_n\|_2^2\le 8c_{\rho}e^{-2(s_0-\tilde{s}_k)}
\le \beta^{\frac{-2}{p-4}}A_3 \frac{16\alpha_0}{\pi}\left(M+1+k\right)^2, \quad \forall n\ge n_1.
\end{eqnarray*}
Thus, there exists $\beta_1:=\beta(\rho, p)>0$ large enough such that $\|\nabla u_n\|_2^2\le \frac{\pi}{2\alpha_0}$ for all $n\ge n_1$ and $\beta\ge \beta_1$.

\end{proof}

{\bf Completion of proof of Theorem \ref{Thm1.1}}.
Take a minimizing sequence $\{u_n\}$ of the functional $J$ on $\mathcal{D}_{\rho}\cap\mathcal{M}$. By Lemma \ref{radial}, we may assume that $\{u_{n}\}\subset H_r^1(\mathbb{R}^2)$.
By Lemma \ref{bound}, for $n\ge n_1$ and $\beta\ge \beta_1$, we have
\begin{eqnarray}\label{5-20-1}
   \int_{\mathbb{R}^{2}}\vert \nabla u_{n}\vert^{2}dx\leq\frac{\pi}{2\alpha_0}.
   \end{eqnarray}
Then $\{u_n\}$ is bounded in $H_r^{1}(\mathbb{R}^{2})$ and there exists $u_0\in H_r^{1}(\mathbb{R}^{2})$ such that
$u_{n}\rightharpoonup u_0$ in $H_r^{1}(\mathbb{R}^{2})$, $u_{n}\rightarrow u_{0}$ in $L^{r}(\mathbb{R}^{2})$ for all $r>2$ and $u_{n}(x)\rightarrow u_0(x)$ for a.e. $x\in \mathbb{R}^{2}$.

Let $P(t)=F(t)$ and $Q(t)=e^{4\alpha_0 t^2}-1-4\alpha_0 t^2$. By $(f_1)-(f_2)$, it is easily seen that
$\frac{P(t)}{Q(t)}\to 0$ as $t\to 0$ and $\frac{P(t)}{Q(t)}\to 0$ as $t\to +\infty$.
By (\ref{5-20-1}) and Lemma \ref{TM}, we get, for some $C_1>0$,
\begin{eqnarray*}
\int_{\mathbb{R}^2}\left(e^{4\alpha_0 u_n^2}-1\right)dx=\int_{\mathbb{R}^2}\left(e^{2\pi \frac{2\alpha_0 u_n^2}{\pi}}-1\right)dx \le C_1.
\end{eqnarray*}
By the Strauss radial lemma (see \cite{BL1983-1}), there exists $C_2>0$ such that $\vert u_{n}(x)\vert\leq C_2\vert x\vert^{-1}, \forall \vert x\vert\neq 0$. Then, applying the Strauss compactness lemma (see \cite{BL1983-1}) implies that
\begin{eqnarray*}\label{5-21-1}
\int_{\mathbb{R}^2} F(u_{n})dx \to \int_{\mathbb{R}^2}F(u_0)dx.
 \end{eqnarray*}
 Similarly, taking $P(t)=f(t)t$ and $Q(t)=\vert t\vert^{p+1}(e^{4\alpha_0 t^2}-1)$, we obtain
   	\begin{eqnarray*}\label{5-21-2}
   	\int_{\mathbb{R}^{2}}f(u_{n})u_ndx\rightarrow\int_{\mathbb{R}^{2}} f(u_{0})u_0dx.
   	\end{eqnarray*}
Then, by Lemma \ref{minimizer}, it follows that
\begin{equation}\label{8-25-1}
 c_{\rho}=J(u_0)>0.
 \end{equation}

In what follows, we claim that $u_0\in \mathcal{S}_{\rho}\cap \mathcal{M}$. By contradiction, we assume that $\int_{\mathbb{R}^{2}}\vert u_0\vert^{2}dx<\rho$.
Since $\big(\mathcal{D}_{\rho}\setminus\mathcal{S}_{\rho}\big)\cap \mathcal{M}$ is an open subset of $\mathcal{M}$, there exists a Lagrange multiplier $\theta\in \mathbb{R}$ such that $u_0$ weakly solves
   	\begin{eqnarray}\label{5-22-1}
   		-\triangle u_0-f(u_0)+\theta(-\triangle u_0-\frac{1}{2}h(u_0))=0.
   	\end{eqnarray}
By classical arguments as in \cite{W1996}, $u_0$ satisfies the following Pohozaev identity
   	\begin{eqnarray}\label{5-22-2}
\int_{\mathbb{R}^{2}}\left[F(u_0)+\frac{1}{2}\theta H(u_0) \right]dx=0.
   	\end{eqnarray}
Then, using $u_0\in\mathcal{M}$, it follows that
 	\begin{eqnarray}\label{8-11-4}
\theta\int_{\mathbb{R}^{2}}\vert \nabla u_0\vert^{2}dx&=&\theta\int_{\mathbb{R}^{2}}H(u_0)dx\nonumber\\
&=&-2\int_{\mathbb{R}^{2}}F(u_0)dx\nonumber\\
&=&-\int_{\mathbb{R}^{2}}\vert \nabla u_0\vert^{2}dx+2c_{\rho},
   	 \end{eqnarray}
which combines with Lemma \ref{minimizer} implies $\theta>-1$. Together with (\ref{5-22-1}) and (\ref{5-22-2}), we obtain
   	\begin{eqnarray*}
 (1+\theta)\int_{\mathbb{R}^{2}}\vert \nabla u_0\vert^{2}dx=\int_{\mathbb{R}^{2}}\left[ H(u_0)+\frac{1}{2}\theta(h(u_0)\tilde{u}-2H(u_0)) \right]dx.
   	\end{eqnarray*}
Then, by $u_0\in\mathcal{M}$ again, we get
   	\begin{eqnarray}\label{3.4}
   	\theta\int_{\mathbb{R}^{2}}\left[h(u_0)\tilde{u}-4H(u_0) \right]dx=0.
   	\end{eqnarray}
   By the regularity arguments as in \cite{doS2001}, it follows that $u_0$ is continuous. By (\ref{3.4}) and Lemma \ref{mono} (i), we obtain that $h(u_0)u_0-4H(u_0)=0$ for all $x\in \mathbb{R}^2$. Since $u_0\in H^1(\mathbb{R}^2),$ we get $\vert u_0(x)\vert \to0$ as $\vert x\vert\to +\infty$. Thus, there exists a open interval $I$ such that $0\in \bar{I}$ and $h(u_0) u_0-4H(u_0)=0$ for $u_0\in \bar{I}$, which implies that $H( u_0)=C\vert u_0\vert^{4}$ for some $C>0$ and $u_0\in \bar{I}$. But this is contrary to $(f_1)-(f_2)$. Therefore, $\theta=0$. In view of (\ref{8-11-4}), we get $\int_{\mathbb{R}^2}F(u_0)dx=0$. However, by $(f_4),$ we have $\int_{\mathbb{R}^2}F( u_0)dx>0$, which provides a contradiction. Hence the claim holds.

Now, by standard arguments, there exist two Lagrange multipliers $\lambda,\mu\in\mathbb{R}$ such that $u_{0}$ weakly solves
   \begin{eqnarray}\label{8-24-1}
   	-\bigtriangleup u_{0}-f(u_{0})+\lambda u_{0}+\mu(-\bigtriangleup u_{0}-\frac{1}{2}h(u_{0}))=0,
   \end{eqnarray}
 by which we get
   \begin{eqnarray}\label{8-23-1}
   &&\int_{\mathbb{R}^{2}}[F(u_{0})-\frac{1}{2}\lambda u_{0}^{2}+\frac{1}{2}\mu H(u_{0})]dx=0,\\
 &&(1+\mu)\int_{\mathbb{R}^{2}}\vert \nabla u_{0}\vert^{2}dx-\int_{\mathbb{R}^{2}}[f(u_{0})u_{0}-\lambda u_{0}^{2}+\frac{1}{2}\mu u_{0}h(u_{0})]dx=0.\label{8-23-2}
   \end{eqnarray}
If $\mu=-1$, by (\ref{8-23-1}) and $u_0\in \mathcal{M},$ it follows that
   \begin{eqnarray*}
   \lambda\int_{\mathbb{R}^{2}}u_{0}^{2}dx
   &=&\int_{\mathbb{R}^{2}}\left[2F(u_{0})-H(u_{0})\right]dx\\
   &=&\int_{\mathbb{R}^{2}}2F(u_{0})dx-\int_{\mathbb{R}^{2}}\vert \nabla u_{0}\vert^{2}dx\\
   &=&-2c_{\rho}<0,
   \end{eqnarray*}
which implies $\lambda<0$. By the Strauss radial lemma in \cite{BL1983-1}, we may assume that $u_0$ is continuous. Then by (\ref{8-24-1}) it follows that $\lambda u_0(x)=f(u_0(x))-\frac{1}{2} h(u_0(x))$ holds for all $x\in \mathbb{R}^2$.
Using $(f_1)-(f_2)$, we have
   \begin{eqnarray*}
   	\lim_{s\rightarrow0}\frac{h(s)}{s}=\lim_{s\rightarrow0}\frac{H(s)}{\frac{1}{2}s^{2}}=\lim_{s\rightarrow0}\frac{sf(s)-2F(s)}{\frac{1}{2}s^{2}}=0,
   \end{eqnarray*}
which implies that $u=0$ is an isolated solution of $\lambda u=f(u)-\frac{1}{2} h(u)$. Hence $u_0\equiv0$, which is in contradiction with $u_0\in \mathcal{S}_{\rho}$. Therefore, $\mu\neq-1$. Combining with (\ref{8-23-1}) and (\ref{8-23-2}) gives
   \begin{eqnarray*}
   &&	-(1+\mu)\int_{\mathbb{R}^{2}}\vert \nabla u_{0}\vert^{2}dx+\int_{\mathbb{R}^{2}}[f(u_{0})u_{0}+\frac{1}{2}\mu u_{0}h(u_{0})]dx\\
   &=&\int_{\mathbb{R}^{2}}[2F(u_{0})+\mu H(u_{0})]dx,
   \end{eqnarray*}
which together with $u_0\in \mathcal{M}$ implies that
   \begin{eqnarray*}
   	\mu \int_{\mathbb{R}^{2}}[h(u_{0})u_{0}-4H(u_{0})]dx=0.
   \end{eqnarray*}
   By similar arguments as above we get
 $\mu=0$. Hence $u_{0}\in\mathcal{S}_{\rho}\cap\mathcal{M}$ is a weak solution of (\ref{1.1}) and $\mathcal{M}$ is a natural constraint. Thus, using (\ref{8-25-1}), $u_0$ is a normalized ground state solution of (\ref{1.1}).
Furthermore, since $u_0$ satisfies the Pohozaev identity it follows that $\lambda>0$.
The proof is complete.

\bmhead{Acknowledgments}
The research of Xiaojun Chang is supported by National Natural Science Foundation of China (No.11971095), while Duokui Yan is supported by National Natural Science Foundation of China (No.11871086). This work was done when Xiaojun Chang visited the Laboratoire de Math\'ematiques, Universit\'e de Bourgogne Franche-Comt\'e during the period from 2021 to 2022 under the support of China Scholarship Council (202006625034), and he would like to thank the Laboratoire for their support and kind hospitality.

\section*{Declarations}

\begin{itemize}
\item Conflict of interest: The authors declare that they have no competing interests.
\end{itemize}

\end{document}